\title{Interval $k$-Graphs and Orders}
\author{David E. Brown\thanks{david.e.brown@usu.edu.}\\
Department of Mathematics and Statistics\\
    Utah State University\\
    Logan, UT 84341-3900\\
    U.S.A.
\and
 Larry J. Langley\thanks{llangley@pacific.edu}\\
 Mathematics Department \\
 University of the Pacific\\
 Stockton, CA 95211\\
 U.S.A.
\and
 Breeann M. Flesch\thanks{fleschb@wou.edu}\\
 Mathematics Department\\
 Western Oregon University\\
 Monmouth, OR 97361\\
 U.S.A.
}
\newtheorem{corollary}{Corollary}[section]
\newtheorem{theorem}{Theorem}[section]
\newenvironment{proof}{\noindent{\bf Proof.}}{\nopagebreak[4]\hfill \rule{2mm}{2mm}}
\newtheorem{proposition}{Proposition}[section]
\begin{document}

\maketitle
\date

\begin{abstract} An interval $k$-graph is the intersection graph of a
family $\mathcal{I}$ of intervals of the real line partitioned into at most $k$
classes with vertices
adjacent if and only if their corresponding intervals intersect and belong to
different classes.
In this paper we discuss the interval $k$-graphs that are the incomparability graphs
of orders; i.e., cocomparability interval $k$-graphs or interval $k$-orders.
Interval $2$-orders have been characterized in many ways, but we show that analogous characterizations
do not carry over to interval $k$-orders, for $k > 2$.
We describe the structure of interval $k$-orders, for any $k$, characterize the interval $3$-orders
(cocomparability interval $3$-graphs) via one forbidden suborder (subgraph), and state a conjecture
for interval $k$-orders (any $k$) that would characterize them via two forbidden suborders.
\end{abstract}

\section{Introduction}

We discuss finite simple graphs and use the notation $G=(V,E)$ to denote a graph with vertex set $V = V(G)$ and edge set $E = E(G)$.
For the complement of graph $G$ we use the notation $\overline{G}$.

An \emph{ordered set} (or \emph{strict partial order}) is a pair $P = (X, \prec)$ consisting of a ground set $X$ and a binary relation $\prec$ on $X$
that is irreflexive, transitive and therefore antisymmetric.
If neither $x \prec y$ nor $y \prec x$ occurs in $P$, we say $x$ and $y$ are \emph{incomparable} and write $x \| y$, otherwise they are \emph{comparable}.
Two graphs are naturally associated to the order $P$: its comparability graph and its incomparability graph.
The graph $G = (V,E)$ is the \emph{comparability graph} of $P$ if $V= X$ and, for $x,y \in X$, $xy \in E$ if and only if $x$ and $y$ are comparable.
The \emph{incomparability graph} of $P$ has vertex set $X$ and vertices $x$ and $y$ adjacent if and only if $x \| y$ in $P$.
Note that although $x\|x$ for any $x \in X$, we choose not to clutter the incomparability graphs with
loops so $x \| x$ does not yield an edge, and of course the complement of $G$ is the incomparability graph of $P$ if
$G$ is the comparability graph of $P$.
When the edges of $G$ can be given a transitive orientation, $G$ is
the comparability graph of some order and when the edges of $\overline{G}$ can be transitively oriented, $G$ is
called a \emph{cocomparability graph} and is hence the incomparability graph of some order.
Most of the graphs we discuss in this paper are cocomparability graphs.

A family of sets $\mathcal{F} = \{S_1, S_2, \dots, S_n\}$ is an \emph{intersection representation} of a graph
$G$ if $V(G)$ can be put into one-to-one correspondence with $\mathcal{F}$ so that $S_i\cap S_j \neq \O$ if and only if
vertices $u$ and $v$ are adjacent in $G$.
For example, if graph $G$ is from the well-studied class of \emph{interval graphs}, then $G$ is a graph which can be represented
so that $\mathcal{F}$ is a family of intervals of the real line.
In this paper we investigate graphs which can be represented as intersection graphs of intervals of the real line, but unlike interval graphs,
with the property that certain subsets of intervals' intersection information does not correspond to adjacency in the graph.
Specifically, we investigate graphs $G=(V,E)$ for which there is a one-to-one correspondence between $V$ and a collection of
intervals of the real line $\mathcal{I}$ partitioned into what we will call \emph{interval classes} or simply \emph{classes}
so that vertices are adjacent if and only if their corresponding intervals intersect and belong to different classes.
We use $I_v$ to denote the interval corresponding to vertex $v$.
If for $G = (V,E)$ there is such a representation $\mathcal{I}$ partitioned into at most $k$ classes
$\mathcal{I} = \mathcal{I}_1 \cup \mathcal{I}_2 \cup \dots \cup \mathcal{I}_k$, with vertices $u$ and $v$ adjacent in $G$ if and only if
$I_u \cap I_v \neq \O$ and $I_u, I_v$ belong to different interval classes, then $G$ is an \emph{interval $k$-graph}.
The collection $\mathcal{I}$ with the partition into classes will be called an \emph{interval $k$-representation} or simply a \emph{representation}
if the context precludes ambiguity.
Note that the set of vertices corresponding to intervals from any class induce an independent set.
In the case $k=2$, this class has been called the \emph{interval bigraphs} and has enjoyed considerable attention recently, see
for example \cite{Bro,DasRoySenWes, HelHua} and their references for more.
Our focus is on the interval $k$-graphs that are cocomparability graphs and hence those interval
$k$-graphs which give rise to a strict partial order we will call an \emph{interval $k$-order}.

The class of probe interval graphs is another class of intersection graphs that has enjoyed recent attention.
See for example \cite{BroLunAus, BroLunShe, McMWanZha, McKMcM, She}.
A probe interval graph is another interval-intersection graph in which certain intervals' intersection information is ignored.
A graph $G$ is a \emph{probe interval graph} if its vertices can be partitioned into sets $P$ (\emph{probes}) and $N$ (\emph{nonprobes}) with an
interval of the real line corresponding to each vertex, and vertices adjacent if and only if their corresponding intervals
intersect and at least one is a probe.
In \cite{BroLanPIO} the probe interval graphs that are cocomparability graphs, and hence
incomparability graphs of \emph{probe interval orders}, are characterized in various ways.
One characterization states the collection of intervals corresponding to the nonprobes has the property that no interval contains another properly
while the probes' intervals are not restricted.
If a probe interval graph has such a representation, it is called a \emph{nonprobe-proper probe interval graph}.

\begin{theorem}\label{equivalence} \emph{(Brown, Langley, \cite{BroLanPIO})} The graph
$G$ is a cocomparability graph whose vertices can be partitioned into sets $P$ and $N$ with $N$ an independent set, and
every 4-cycle alternates between $N$ and $P$ if and only if $G$ is a nonprobe-proper probe interval graph.
\end{theorem}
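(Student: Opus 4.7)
The plan is to prove the two implications separately. The reverse direction is relatively direct once the correct orientation is chosen; the forward direction, which must construct an explicit interval representation, is where the main work lies.

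For the ($\Leftarrow$) direction, assume $G$ is a nonprobe-proper probe interval graph with representation $\mathcal{I}$, probe set $P$, and nonprobe set $N$. Independence of $N$ is part of the definition. The $4$-cycle alternation is automatic for any probe interval graph: two adjacent vertices of a $4$-cycle cannot both lie in $N$ (violating independence of $N$), which immediately rules out every non-alternating pattern except three-in-$P$; and three-in-$P$ fails because the two non-adjacent probes on the $4$-cycle have disjoint intervals, forcing any interval meeting both (namely the third probe and the unique nonprobe) to straddle the gap between them, yielding an unwanted intersection. The substantive step is cocomparability. I would orient each non-edge $uv$ of $G$ as $u\to v$ exactly when the right endpoint of $I_u$ lies strictly to the left of the right endpoint of $I_v$, with ties broken by left endpoints, and verify transitivity by cases on the probe/nonprobe membership of the three vertices; the nonprobe-proper hypothesis is used precisely in the nonprobe--nonprobe case to ensure that the orderings by left and right endpoints agree, so that a length-$2$ directed path in $\overline{G}$ never closes a triangle with an edge of $G$.

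For the ($\Rightarrow$) direction, fix a transitive orientation $T$ of $\overline{G}$ and let $\sigma$ be a linear extension of the associated strict order. The plan is to build the representation by a sweep along $\sigma$, assigning each probe $p \in P$ an interval whose endpoints encode its $\sigma$-position relative to its neighborhood and non-neighborhood, then inserting each nonprobe $n \in N$ as an interval spanning just enough of the real line to meet every probe $p$ with $np \in E$ and to avoid every probe $p$ with $np \notin E$. Since $N$ is independent, intersections among nonprobe intervals are unconstrained by adjacencies; the $4$-cycle alternation hypothesis is exactly what allows a common choice of nonprobe intervals in which no one contains another.

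The main obstacle is this last claim: that the $4$-cycle alternation hypothesis rules out forced nesting of nonprobe intervals. The planned argument is by contraposition. Suppose that under every admissible placement one is forced into $I_n \subseteq I_{n'}$ for some nonprobes $n, n'$. Then probes witnessing the containment must exist, namely probes $p, p'$ with $np \in E$ but $n'p \notin E$ on one side of the nesting and $n'p' \in E$ but $np' \notin E$ (or a symmetric variant) on the other. Reading off the induced adjacencies on $\{n, p, n', p'\}$ from $T$ and the required intersections, I expect to extract a $4$-cycle on these four vertices whose class pattern is $N,N,P,P$ rather than alternating, contradicting the hypothesis. Once this combinatorial obstruction lemma is in hand, the actual interval assignment reduces to a routine sweep along $\sigma$, and independence of $N$ together with the cocomparability order are respected by design.
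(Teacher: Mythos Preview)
This theorem is not proved in the present paper at all: it is quoted from \cite{BroLanPIO} as background (note the attribution in the statement), so there is no in-paper proof to compare your proposal against.

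Evaluating the proposal on its own merits, the reverse direction is essentially right, though your sentence ``rules out every non-alternating pattern except three-in-$P$'' silently skips the four-in-$P$ case; the same spanning-the-gap argument disposes of it (equivalently, the probes induce an interval graph, hence a chordal graph, so no induced $C_4$ on four probes).

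The forward direction, however, has a real gap in the obstruction lemma. You claim that a forced containment $I_n\subseteq I_{n'}$ produces witnesses $p,p'$ with $np\in E$, $n'p\notin E$ and $n'p'\in E$, $np'\notin E$, and that $\{n,p,n',p'\}$ then carries a $4$-cycle with pattern $N,N,P,P$. Two problems. First, if $I_n\subseteq I_{n'}$ then every probe meeting $I_n$ also meets $I_{n'}$, so a probe with $np\in E$ and $n'p\notin E$ cannot exist; the witnesses you want go the other way ($n'p\in E$, $np\notin E$). Second, even with the corrected witnesses the four vertices do not form a $4$-cycle: $nn'\notin E$ (both nonprobes) and $np,np'\notin E$, so $n$ has no neighbours among the other three and the induced subgraph is at best a path or star, never a cycle. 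More fundamentally, an induced $4$-cycle with cyclic pattern $N,N,P,P$ would require two adjacent nonprobes, which is impossible, so the very pattern you aim to produce is vacuous and your contraposition collapses. To make this direction work you must either locate a $4$-cycle of type $P,P,P,N$ using additional probe vertices beyond $p,p'$, or abandon the nesting-to-$4$-cycle route entirely and argue constructively that the order $\sigma$ and the alternation hypothesis allow a nonprobe placement that is proper from the outset. Your sweep description is too schematic to tell which of these is intended.
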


\noindent The results we develop here are similar to those in \cite{BroLanPIO} in that we are (1) trying to do for interval $k$-orders what
Fishburn did for interval orders (see \cite{Fis}) and (2) we show that the mechanism by which an interval $k$-graph
may contain an obstruction to being a cocomparability graph is having an interval from some interval class contain
another from that class properly.
Whence we define the following restricted class of interval $k$-graphs.
Let $G$ be an interval $k$-graph with an interval representation $\mathcal{I}$
partitioned into classes $\mathcal{I}_1, \mathcal{I}_2, \dots, \mathcal{I}_k$ so that no interval from any class contains another from its class properly.
We call such an interval $k$-graph a \emph{class-proper interval $k$-graph}, and the collection of intervals representing it a
\emph{class-proper representation}.

Thanks to the monumental characterization of transitively orientable graphs by Gallai (Theorem \ref{Gallai} below) we can
find interval $k$-graphs which are not cocomparability graphs by identifying odd asteroids.
An \emph{odd asteroid} is a sequence $v_0, P_0, v_1, P_1$, $v_2, \dots, v_{2n}, P_{2n},v_0$, where $v_0, v_1, \dots, v_{2n}$
are distinct vertices, $P_i$ is a $v_i,v_{i+1}$-path, and $N(v_i)\cap P_{i + n} = \O$, where subscripts are taken modulo $2n+1$.
If a graph has a set of $2n+1$ vertices on which an odd asteroid exists, we will call the set of vertices a $(2n+1)$-asteroid.
The graph, which we refer to as $T_2$ in Figure \ref{fig:T2} has a 3-asteroid (also known as an \emph{asteroidal triple}) on the vertices $a, b,$ and $c$.
The graph $G$ of Figure \ref{fig:not_coco_no_at} has a 5-asteroid on $v_0 = q, v_1 = x, v_2 = a, v_3 = c,$ and $v_4 = b$, whence it is not
a cocomparability graph, and neither is $T_2$, via the theorem of Gallai we mentioned.

\begin{theorem}\label{Gallai} \emph{(Gallai, \cite{Gal})} The complement of a graph $G$ has a transitive orientation if and only
if $G$ has no odd asteroid. \end{theorem}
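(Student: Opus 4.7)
The plan is to invoke the classical theory of the \emph{forcing relation} $\Gamma$ on oriented edges of $\overline{G}$, defined by $\overrightarrow{ab}\,\Gamma\,\overrightarrow{ac}$ whenever $ab, ac \in E(\overline{G})$ and $bc \notin E(\overline{G})$. In any transitive orientation of $\overline{G}$ these two oriented edges must appear together at $a$, since a mixed orientation would create a path $b \to a \to c$ (or its reverse) and force $bc$ to become an edge. The symmetric-transitive closure of $\Gamma$ partitions the oriented edges of $\overline{G}$ into \emph{implication classes}, and a standard lemma says that $\overline{G}$ admits a transitive orientation if and only if no implication class contains both $\overrightarrow{uv}$ and $\overrightarrow{vu}$ for some edge $uv$.

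For the forward direction, suppose $G$ contains an odd asteroid $v_0, P_0, v_1, \dots, v_{2n}, P_{2n}, v_0$. The hypothesis $N(v_i) \cap P_{i+n} = \emptyset$ (in $G$) means every vertex of $P_{i+n}$ is adjacent to $v_i$ in $\overline{G}$, while consecutive vertices of $P_{i+n}$ are non-adjacent in $\overline{G}$ (they form an edge of the path $P_{i+n}$ in $G$). Chaining $\Gamma$-forcings along $P_{i+n}$ therefore shows that the two edges $v_i v_{i+n}$ and $v_i v_{i+n+1}$ of $\overline{G}$ are forced to receive the same orientation relative to $v_i$. Composing these forcings as $i$ ranges over $0, 1, \dots, 2n$ yields a closed cyclic $\Gamma$-chain; because $2n+1$ is odd, the accumulated forcings flip orientation an odd number of times, placing some oriented edge and its reverse in one implication class and so ruling out a transitive orientation.

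The converse is the substantive direction. Assuming $\overline{G}$ has no transitive orientation, fix a \emph{minimal} $\Gamma$-chain $\overrightarrow{u_0 u_1}, \overrightarrow{u_1 u_2}, \dots, \overrightarrow{u_{\ell-1} u_\ell}$ ending at the reverse of its first entry. The plan is to reorganize the anchor vertices of this chain into a cyclic sequence $v_0, v_1, \dots, v_{2m}$ and to take for each $P_i$ the path in $G$ supplied by the non-edge of $\overline{G}$ that witnesses the corresponding $\Gamma$-step (such a non-edge is precisely an edge of $G$, and successive non-edges assemble into a $G$-path). Minimality of the chain is then used to prune repetitions so the $v_i$ are distinct and each $P_i$ is internally disjoint from the relevant anchors. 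The non-neighborhood condition $N(v_i) \cap P_{i+m} = \emptyset$ is exactly the statement that each forcing step relates edges sharing an anchor with a non-edge between the other endpoints, and the length $2m+1$ is odd because reversing an oriented edge requires an odd number of orientation-preserving $\Gamma$-steps in a self-inverse class.

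The main obstacle is precisely this extraction step in the converse: showing that an arbitrary minimal self-inverse $\Gamma$-chain can be massaged into an odd asteroid with distinct anchors, properly embedded paths, and the exact non-neighborhood requirement. This bookkeeping is the technical heart of Gallai's original argument, and I expect to handle it by induction on chain length, using the modular structure of $\overline{G}$ to eliminate repeated anchors and to guarantee that the witnessing paths in $G$ avoid the neighborhoods of opposing anchors. The forward direction and the forcing framework itself should be routine by comparison.
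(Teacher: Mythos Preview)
The paper does not prove Theorem~\ref{Gallai}; it is quoted as a classical result of Gallai with a citation to \cite{Gal} and then used as a black box. There is therefore no ``paper's own proof'' to compare your attempt against.

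That said, a few remarks on your sketch. The forward direction is the standard implication-class argument and is essentially correct: walking the anchor $v_i$ along the path $P_{i+n}$ (whose vertices are all $\overline{G}$-neighbours of $v_i$ and whose consecutive pairs are $\overline{G}$-non-edges) forces $\overrightarrow{v_i v_{i+n}}$ and $\overrightarrow{v_i v_{i+n+1}}$ into the same implication class, and alternating these forcings with the analogous ones anchored at $v_{i+n+1}$ along $P_i$ closes up a cycle of forcings whose parity yields the contradiction. You should be explicit about how the two kinds of steps interact, since as written ``the accumulated forcings flip orientation an odd number of times'' is asserted rather than checked.

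The converse, however, is not a proof but a plan. You correctly identify the difficulty --- turning a minimal self-inverse $\Gamma$-chain into an odd asteroid with \emph{distinct} anchor vertices and honest $G$-paths satisfying the exact non-neighbourhood condition --- and then defer it (``I expect to handle it by induction on chain length, using the modular structure of $\overline{G}$''). That deferral hides essentially all of the work: a $\Gamma$-chain can revisit anchors, the witnessing non-edges need not line up into paths without repetitions, and the non-neighbourhood condition $N(v_i)\cap P_{i+n}=\emptyset$ is strictly stronger than what a single $\Gamma$-step gives you. Gallai's original argument (and the modern expositions, e.g.\ Golumbic's book or the Maffray--Preissmann translation) spends real effort here, and invoking ``modular structure'' without saying which modules and how they prune the chain is not yet an argument. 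If you intend to submit a proof rather than cite the result, this step needs to be written out in full.
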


\noindent But $G$ and $T_2$ have interval $k$-representations as illustrated and so
\emph{the class of interval $k$-graphs is not contained in the class of cocomparability graphs.}
The converse containment relationship also does not hold; this will be shown below.

\begin{figure}[h]
\begin{center}
\psfrag{a}{{\small$a$}}
\psfrag{b}{{\small$b$}}
\psfrag{c}{{\small$c$}}
\psfrag{d}{{\small$d$}}
\psfrag{e}{{\small$e$}}
\psfrag{x}{{\small$x$}}
\psfrag{y}{{\small$y$}}
\psfrag{T2}{$T_2$}
\includegraphics[scale=.4]{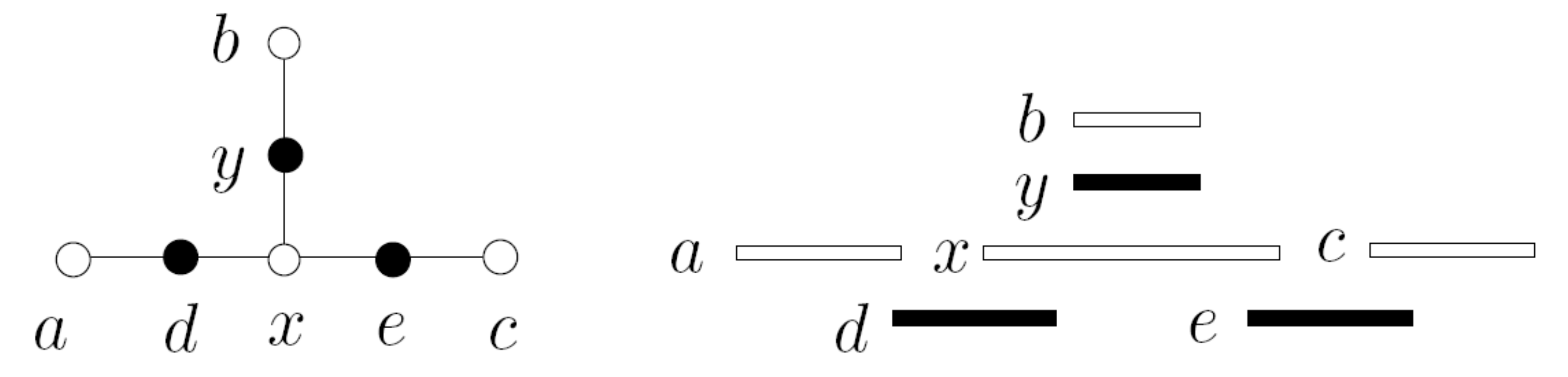}
\caption{An interval 2-graph with an asteroidal triple and
interval representation illustrating how an asteroidal triple requires
some interval to contain another from the same class properly.}\label{fig:T2}
\end{center}
\end{figure}

\begin{figure}[h]
\begin{center}
\psfrag{a}{{\small$a$}}
\psfrag{b}{{\small$b$}}
\psfrag{c}{{\small$c$}}
\psfrag{p}{{\small$p$}}
\psfrag{q}{{\small$q$}}
\psfrag{x}{{\small$x$}}
\psfrag{y}{{\small$y$}}
\psfrag{I1}{{\small$\mathcal{I}_1$}}
\psfrag{I2}{{\small$\mathcal{I}_2$}}
\psfrag{I3}{{\small$\mathcal{I}_3$}}
\psfrag{G}{$G$}
\psfrag{Gc}{$\overline{G}$}
\includegraphics[scale=.6]{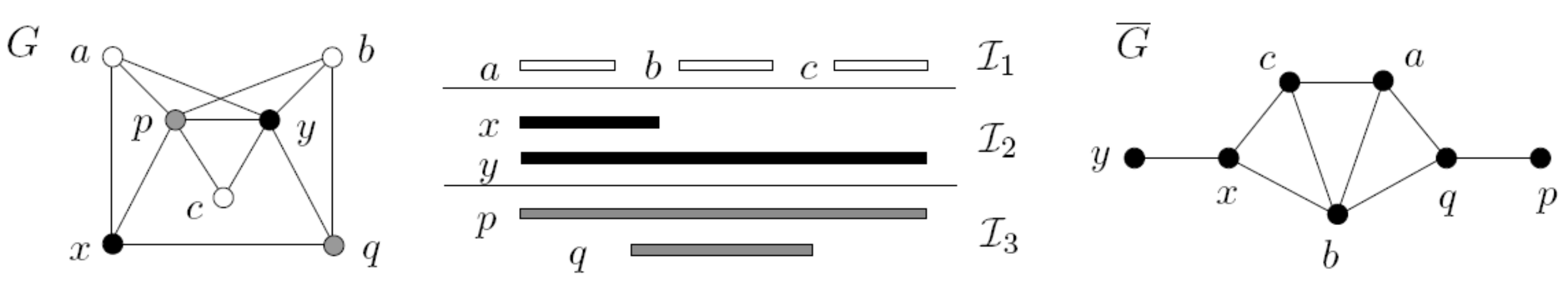}
\caption{An interval 3-graph which is not a cocomparability graph
and its representation showing it is not class-proper.}\label{fig:not_coco_no_at}
\end{center}
\end{figure}

Any probe interval graph is an interval $k$-graph, as was shown in \cite{Bro}, but in Figure \ref{fig:cocoIkG_notPIG} we
have a cocomparability interval $3$-graph $M$ which is not a probe interval graph.  The poset $P$ corresponds to a
transitive orientation of the complement of $M$.
Therefore \emph{the class of cocomparability interval $k$-graphs contains the
class of cocomparability probe interval graphs}.

\begin{figure}[htbp]
\begin{center}
\psfrag{a}{{\small$a$}}
\psfrag{b}{{\small$b$}}
\psfrag{p}{{\small$p$}}
\psfrag{q}{{\small$q$}}
\psfrag{x}{{\small$x$}}
\psfrag{y}{{\small$y$}}
\psfrag{I1}{{\small$\mathcal{I}_1$}}
\psfrag{I2}{{\small$\mathcal{I}_2$}}
\psfrag{I3}{{\small$\mathcal{I}_3$}}
\psfrag{M}{$M$}
\psfrag{P}{$P$}
\includegraphics[scale=.6]{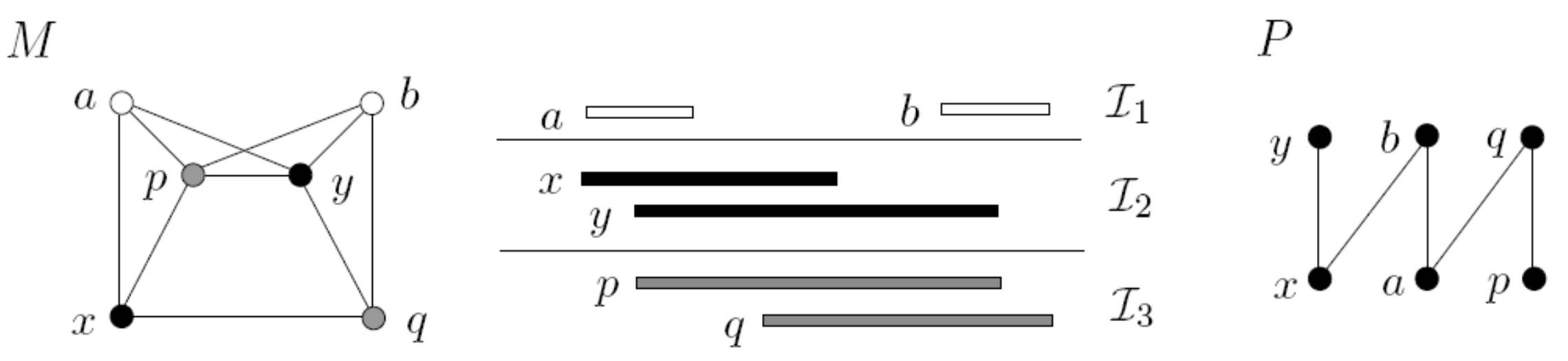}
\caption{A cocomparability interval 3-graph $M$ which is not a probe interval graph, its class-proper interval representation,
and the Hasse diagram of the strict order $P$ obtained from a transitive orientation of $\overline{M}$ by
the methods in Theorem \ref{transCPIkG}.}\label{fig:cocoIkG_notPIG}
\end{center}
\end{figure}

A beautiful characterization of cocomparability graphs by Golumbic, Rotem, and Urrutia (Theorem \ref{func=coco} below) shows they are precisely the
function graphs.
A \emph{function graph} $G$ is the intersection graph of a family of curves of continuous functions $f_i : [a,b] \to \mathbb{R}$;
that is, vertices $i$ and $j$ are adjacent if and only if $f_i(x) = f_j(x)$ for some $x \in [a,b]$.  It is easy to see that every
function graph is a cocomparability graph: orient $i \to j$ in $\overline{G}$ if $f_i(y) < f_j(y)$ for all $y \in [a,b]$.

\begin{theorem}\label{func=coco} \emph{(Golumbic, Rotem, Urrutia, \cite{GolRotUrr})} A graph is a function graph
if and only if it is a cocomparability graph.\end{theorem}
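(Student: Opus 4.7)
The plan is to prove the two implications separately, relying on a transitive orientation of $\overline{G}$ and on a realizer of the resulting poset.

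For the easy direction, suppose $G$ is a function graph with representation $\{f_i : [a,b]\to\mathbb{R}\}$ and non-adjacent pair $i,j$, so that $f_i(x)\ne f_j(x)$ for every $x\in[a,b]$. Since $f_i - f_j$ is continuous and never zero, the intermediate value theorem forces it to have constant sign; hence either $f_i<f_j$ pointwise on $[a,b]$ or the reverse. Orient $i\to j$ in $\overline{G}$ in the first case. Transitivity follows because the pointwise ordering of functions is transitive, so $\overline{G}$ has a transitive orientation and $G$ is a cocomparability graph.

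For the converse, let $G$ be a cocomparability graph and fix a transitive orientation of $\overline{G}$; call the resulting strict partial order $P=(V,<_P)$. I will invoke the basic fact from dimension theory that every poset has a \emph{realizer}: a finite family $L_1,\dots,L_k$ of linear extensions of $P$ whose intersection is $P$, i.e. $u<_P v$ if and only if $u<_{L_i} v$ for every $i$. For each $i$, let $\pi_i(v)\in\{1,\dots,|V|\}$ denote the position of $v$ in $L_i$, and define a piecewise-linear function $f_v:[1,k]\to\mathbb{R}$ by setting $f_v(i)=\pi_i(v)$ and interpolating linearly on each subinterval $[i,i+1]$. Because the $\pi_i(v)$ are pairwise distinct across $v$ for each fixed $i$, distinct functions never coincide at the breakpoints.

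It remains to check that $f_u$ and $f_v$ share a common value somewhere in $[1,k]$ exactly when $uv\in E(G)$. If $u,v$ are adjacent in $G$, then $u\|v$ in $P$; by the realizer property there exist indices $i,j$ with $u<_{L_i}v$ and $v<_{L_j}u$, so $f_u-f_v$ changes sign on $[1,k]$ and the intermediate value theorem produces a common value. Conversely, suppose $u<_P v$ (the other comparable case is symmetric). Then $\pi_i(u)<\pi_i(v)$ for every $i$, so $f_u(i)<f_v(i)$ at every breakpoint. On any subinterval $[i,i+1]$ the difference $f_u-f_v$ is affine in $x$, hence monotone, and negative at both endpoints, so it is negative throughout. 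Thus $f_u$ and $f_v$ never meet when $u,v$ are comparable, completing the representation.

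The main obstacle in this plan is really the use of the realizer — everything else is just continuity and linear interpolation. The crucial content is therefore the existence of the family $L_1,\dots,L_k$ extending $P$ that simultaneously reverses every incomparable pair in some direction, which is precisely the classical theorem of Dushnik and Miller. Once that is cited, the piecewise-linear construction is forced and the verification above is a routine application of the intermediate value theorem and the monotonicity of affine functions.
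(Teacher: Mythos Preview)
Your argument is correct. The easy direction matches the paper's one-line sketch (``orient $i \to j$ in $\overline{G}$ if $f_i(y) < f_j(y)$ for all $y \in [a,b]$''), and your use of the intermediate value theorem to justify that this orientation is well defined is the right way to fill in that sketch.

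For the converse, note that the paper does not actually prove this theorem at all: it is stated as a cited result of Golumbic, Rotem, and Urrutia, and no argument for the hard direction appears anywhere in the paper. Your realizer-based construction is essentially the original proof from \cite{GolRotUrr}: stack the $k$ linear extensions as permutation diagrams and read off piecewise-linear curves. One tiny cosmetic point: if $P$ happens to be a chain, the realizer has a single linear extension and your domain $[1,k]$ degenerates to a point; this is harmless since you may always pad the realizer with a duplicate linear extension to force $k\ge 2$.
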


\noindent A graph is \emph{weakly chordal} if neither it nor its complement contains an induced cycle on five or more
vertices as an induced subgraph.

\begin{theorem}\label{IkGweakchord}\emph{(Brown, \cite{Bro})} If $G$ is an interval $k$-graph, then $G$ is weakly chordal. \end{theorem}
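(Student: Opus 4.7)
The plan is to rule out induced $C_n$ and induced $\overline{C_n}$ in $G$ for every $n \geq 5$; by the paper's definition of weakly chordal, this suffices. I would fix an interval $k$-representation $\mathcal{I}$ with classes $\mathcal{I}_1, \ldots, \mathcal{I}_k$ and write $I_{v_i} = [l_i, r_i]$. The unifying trick in both cases will be to focus on the vertex whose interval has smallest right endpoint in the purported obstruction and apply the following principle: two non-adjacent vertices whose intervals intersect must belong to the same class.

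For the cycle case, suppose $C = v_1 v_2 \cdots v_n v_1$ is induced with $n \geq 5$, and index so that $r_1 = \min_i r_i$. Adjacency of $v_1$ with $v_2$ and $v_n$, together with $r_2, r_n \geq r_1$, forces $l_2, l_n \leq r_1$, so $r_1 \in I_{v_2} \cap I_{v_n}$. Since $v_2 v_n$ is a non-edge, $v_2$ and $v_n$ must share a class $B$, and $v_1$ must lie in a different class $A$. I would then show $l_i > r_1$ for every $i \in \{3, \ldots, n-1\}$: such a $v_i$ is non-adjacent in $C$ to $v_1$ and to at least one of $v_2, v_n$ (to $v_n$ when $i = 3$, to $v_2$ when $i = n-1$, to both when $4 \leq i \leq n-2$), so if $I_{v_i}$ contained $r_1$, the principle would place $v_i$ simultaneously in $A$ and $B$. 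Because $v_3$ and $v_{n-1}$ each have a neighbor in $B$, they lie outside $B$, so the non-edges $v_3 v_n$ and $v_{n-1} v_2$ span different classes and hence correspond to disjoint intervals, giving $r_n < l_3$ and $r_2 < l_{n-1}$. Combining with the cycle edges $v_2 v_3$ and $v_{n-1} v_n$ (which supply $l_3 \leq r_2$ and $l_{n-1} \leq r_n$) yields the cyclic chain
\[
l_3 \leq r_2 < l_{n-1} \leq r_n < l_3,
\]
the required contradiction.

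For the anti-cycle case, $\overline{C_5} = C_5$ is already excluded by the previous step. For $n \geq 6$, suppose $\{v_1, \ldots, v_n\}$ induces $\overline{C_n}$ with non-edges exactly $v_i v_{i+1}$ (indices mod $n$), and again take $r_1$ minimum. Each of $v_3, v_4, v_5$ is adjacent to $v_1$ in $\overline{C_n}$, so the same minimality reasoning places $r_1$ in $I_{v_3} \cap I_{v_4} \cap I_{v_5}$. The non-edges $v_3 v_4$ and $v_4 v_5$ of $\overline{C_n}$ then force $\{v_3,v_4\}$ into a common class and $\{v_4,v_5\}$ into a common class, yet the edge $v_3 v_5$ demands that $v_3, v_5$ lie in different classes, the desired contradiction. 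The main obstacle I anticipate lies in the cycle case, where one must keep track of enough non-adjacencies among $v_1, v_2, v_n$ to pin down the position of every $v_i$ with $3 \leq i \leq n-1$; once that uniform localization is in place, the contradictory chain of inequalities falls out essentially automatically, and the anti-cycle case is then dispatched quickly by pivoting on the triple $\{v_3, v_4, v_5\}$, which exists precisely because $n \geq 6$.
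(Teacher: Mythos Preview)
The paper does not supply a proof of this theorem; it merely quotes the result from Brown~\cite{Bro}. Hence there is no ``paper's own proof'' to compare against. That said, your argument is correct and complete.

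For the induced-$C_n$ case your pivot on the vertex with smallest right endpoint is exactly the right move: it forces $r_1$ into $I_{v_2}\cap I_{v_n}$, pins $v_2,v_n$ to a common class $B$ and $v_1$ to a different class $A$, and then the two-way class conflict pushes every $I_{v_i}$ with $3\le i\le n-1$ strictly to the right of $r_1$. The four inequalities you extract from the edges $v_2v_3$, $v_{n-1}v_n$ and the cross-class non-edges $v_3v_n$, $v_{n-1}v_2$ assemble into the impossible cycle $l_3\le r_2<l_{n-1}\le r_n<l_3$; this works uniformly for all $n\ge 5$ (in particular the endpoints $3$ and $n-1$ are distinct). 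For $\overline{C_n}$ with $n\ge 6$, your use of the triple $v_3,v_4,v_5$ is clean: all three intervals contain $r_1$, the consecutive non-edges force $v_3,v_4,v_5$ into a single class, and the edge $v_3v_5$ (present precisely because $n\ge 6$) gives the contradiction. The case $n=5$ is handled by $\overline{C_5}\cong C_5$.

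One very small stylistic point: you implicitly use that $r_i\ge r_1$ rules out $r_1>r_i$, so $r_1\notin I_{v_i}$ forces $l_i>r_1$; this is clear, but you might state it once when you first invoke minimality.
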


\noindent In Figure \ref{fig:func_not_IkG} we have a function representation of the complement of a 6-cycle, which is not an
interval $k$-graph, by Theorem \ref{IkGweakchord}; therefore \emph{there is no containment relationship between interval $k$-graphs
and cocomparability graphs.}

\begin{figure}[h!]
\begin{center}
\includegraphics[scale=0.6]{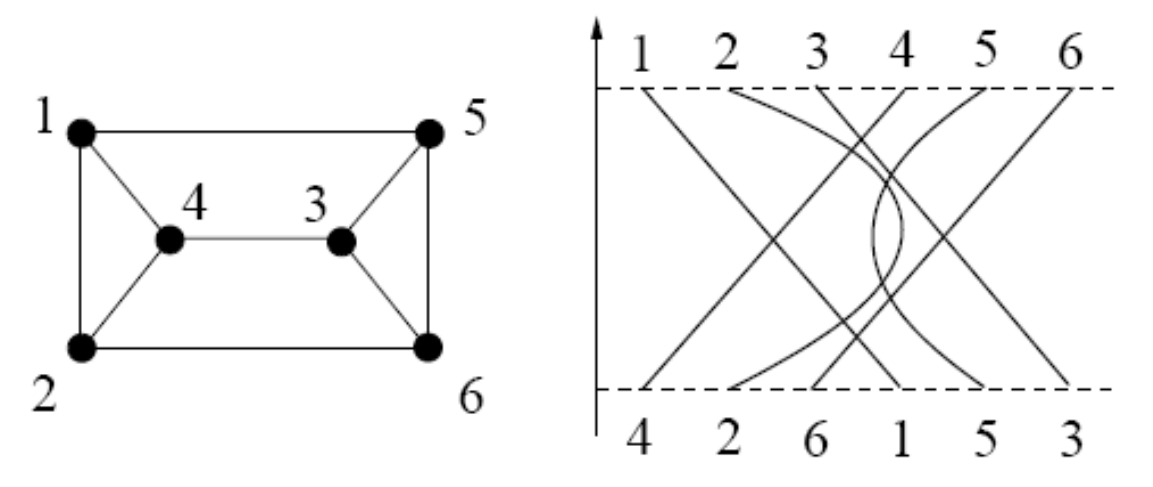}
\caption{A function representation of the complement of a 6-cycle, which is not an interval
$k$-graph by Theorem \ref{IkGweakchord}.}\label{fig:func_not_IkG}
\end{center}
\end{figure}

\section{Interval $2$-Graphs and Orders}

In the next section, we show that if an interval $k$-graph, for $k \geq 3$, has a class-proper representation, then
it is a cocomparability graph.
In this section we show that the well-known and well-studied classes of bipartite cocomparability graphs,
bipartite permutation graphs, and proper and unit interval bigraphs are precisely the class-proper interval $2$-graphs.
We also show that for $k > 2$ none of the characterizations in
Theorem \ref{theorem:charPIBG} extend to proper interval $k$-graphs or class-proper interval $k$-graphs.
Furthermore, for $k > 2$, the classes proper, unit, and class-proper interval $k$-graphs are different.

To prove that class-proper interval $2$-graphs are precisely the cocomparability interval $2$-graphs, we will take
the circuitous route of proving the following cycle of implications, referring to statements in Theorem \ref{theorem:charPIBG}:
\emph{1} $\implies$ \emph{2} $\implies$ \emph{3} $\implies$ \emph{4} $\implies$ \emph{5} $\implies$ \emph{1}.
In so doing we will establish a characterization via the existence of an ordering of the graph's vertices, statement \emph{5},
which is stronger than the strong ordering referred to in statement \emph{6}.
This is the most efficient way we could prove this, possibly due to the fact that the class-proper restriction is not much of one;
to wit, intervals in one interval class could be distinct points while those in the other be unit-length intervals for example.
In the interest of brevity we will direct the reader to the literature where the results and all definitions can be found; we will give
only the essential ones.

A bipartite graph $G$ is a \emph{unit interval bigraph} if it is an interval 2-graph which has a representation where all intervals
have identical length.
$G$ is a \emph{proper interval bigraph} if $G$ is an interval 2-graph which has a representation
in which no interval contains another properly.
A graph $H$ is a \emph{permutation graph} if $V(H) = \{1,2,\dots, n\}$ and there is
a permutation $(\pi_1, \pi_2, \dots, \pi_n)$ of the numbers $V(H)$ such that vertices are adjacent if and only if the numbers are in
reversed order in the permutation.
Equivalently, and this is the definition we will use, a permutation graph $G$ may be defined as the
intersection graph of line segments $\{\ell_v:v\in V(G)\}$ contained
in the space between parallel line segments $L_1$ and $L_2$, we will call \emph{channels}.
For the point where $\ell_v$ intersects $L_i$ we will use $p(v;i)$.
For the intervals in an interval representation, whether it be unit, proper, or class-proper, we will use $I(v) = [L(v), R(v)]$
to denote the interval corresponding to vertex $v$.

\begin{theorem}\label{theorem:charPIBG} Let $G$ be a bipartite graph.  The following are equivalent:\\
(1) $G$ is a unit interval bigraph \emph{\cite{BraSpiSte, BraSpiSte2, HelHua, SanSen}};\\
(2) $G$ is a proper interval bigraph \emph{\cite{HelHua}};\\
(3) $G$ is a class-proper interval 2-graph;\\
(4) $G$ is a permutation graph \emph{\cite{BraSpiSte, HelHua}};\\
(5) $V(G)$ can be ordered $(v_1, v_2, \dots, v_n)$ so that whenever $v_iv_k \in E(G)$ and $i<j<k$, $v_j$ is
adjacent to whichever of $\{v_i,v_k\}$ is not in its partite set;\\
(6) $G$ has a strong ordering \emph{\cite{BraSpiSte}};\\
(7) The bipartite adjacency matrix of $G$ has a monotone consecutive arrangement \emph{\cite{SanSen}};\\
(8) $G$ is the comparability graph of a poset of dimension at most 2 \emph{\cite{DusMil, HelHua}};\\
(9) $G$ is the incomparability graph of a poset of dimension at most 2 \emph{\cite{DusMil, HelHua}};\\
(10) $\overline{G}$ is a proper circular arc graph \emph{\cite{HelHua}};\\
(11) $G$ is contains no asteroidal triple \emph{\cite{HelHua}}.\end{theorem}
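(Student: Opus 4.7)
The plan is to establish the cycle $1 \implies 2 \implies 3 \implies 4 \implies 5 \implies 1$. The implications $1 \implies 2$ and $2 \implies 3$ are immediate: equal-length intervals never properly contain one another, and a representation in which no interval properly contains another trivially has no proper containments within a single class. For $3 \implies 4$, I use that bipartite graphs are automatically comparability graphs (orient every edge from $A$ to $B$), so it suffices to transitively orient $\overline{G}$. Given a class-proper $2$-representation with classes $A$ and $B$, the class-proper condition allows me to list the intervals within each class so that $L(u) < L(v)$ iff $R(u) < R(v)$. I orient $u \to v$ within a class when $L(u) < L(v)$, and for non-adjacent $u \in A, v \in B$ I orient $u \to v$ when $R(u) < L(v)$. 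Transitivity is verified by a short case analysis over the partite sets of a length-two directed path: in each case one obtains a chain like $L(u) \le R(u) < L(v) \le R(v) < L(w)$, using class-propriety within a class to translate between $L$-order and $R$-order as needed. Hence $\overline{G}$ is a comparability graph, and $G$ is a permutation graph.

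For $4 \implies 5$, take a permutation representation by line segments between two parallel channels, with $t(v)$ and $b(v)$ the top and bottom endpoints, and order the vertices $v_1,\dots,v_n$ by the values $t(v_i)$. Suppose $v_iv_k \in E$ with $i < j < k$; then the segments of $v_i$ and $v_k$ cross, forcing $b(v_i) > b(v_k)$, and bipartiteness places $v_i$ and $v_k$ in opposite partite sets. If $v_j$ lies in the same partite set as $v_i$, then $v_iv_j \notin E$ means the segments of $v_i$ and $v_j$ do not cross; since $t(v_i) < t(v_j)$, this forces $b(v_j) > b(v_i) > b(v_k)$, so the segments of $v_j$ and $v_k$ cross and $v_jv_k \in E$, as required. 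The case in which $v_j$ lies in the same partite set as $v_k$ is symmetric, giving $v_iv_j \in E$ instead.

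For $5 \implies 1$, I first observe that applying $(5)$ to any two neighbors of a vertex $v_i$ shows that the neighbors of $v_i$, restricted to the opposite partite set, occupy a contiguous block of indices in the ordering. Using this block structure, the unit representation is built by assigning all intervals the common length $\ell$ and choosing left endpoints to be a strictly increasing function of $i$ that interleaves the two partite sets so that $v_i \in A$ and $v_j \in B$ have overlapping length-$\ell$ intervals exactly when $j$ falls inside $v_i$'s block of opposite-class neighbors; same-class overlaps are permitted since they do not create edges. The main obstacle of the whole cycle lies here: a single global length $\ell$ and a single numerical placement must simultaneously witness every cross-class adjacency and refuse every cross-class non-adjacency, and this forces one to interleave the two partite sets delicately, driven by the block structure that $(5)$ yields. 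The remaining implications reduce either to inclusions among representation classes or to routine case-checks on a transitive orientation or a line-segment crossing pattern, and should go through directly.
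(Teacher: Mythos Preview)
Your cycle matches the paper's, and your arguments for $1\Rightarrow 2$, $2\Rightarrow 3$, and $4\Rightarrow 5$ are essentially the paper's (the paper phrases $4\Rightarrow 5$ topologically---``$\ell_{v_j}$ starts between $\ell_{v_i}$ and $\ell_{v_k}$ so it must cross one of them''---while you compute with $b(\cdot)$, but the content is the same).

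For $3\Rightarrow 4$ you take a genuinely different route. The paper builds the line-segment model directly from the class-proper intervals: put a copy of the representation on each channel, and for $v\in X$ draw the segment from $l(v)$ on $L_1$ to $r(v)$ on $L_2$, while for $v\in Y$ draw from $r(v)$ on $L_1$ to $l(v)$ on $L_2$; class-properness makes same-class segments non-crossing, and cross-class segments cross iff the intervals meet. Your argument instead transitively orients $\overline G$ and then invokes the Pnueli--Lempel--Even/Dushnik--Miller characterization ``permutation graph $\Leftrightarrow$ $G$ and $\overline G$ both comparability.'' Your orientation is correct (the four partite-type cases all go through as you indicate), but you are silently importing that external equivalence; the paper's construction is self-contained and, incidentally, produces the very segment model you then use in $4\Rightarrow 5$.

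The real gap is $5\Rightarrow 1$. You correctly extract that each vertex's opposite-class neighbours form a contiguous block in the ordering, but then you assert that one can choose a common length $\ell$ and increasing left endpoints so that overlaps occur exactly on those blocks. That assertion \emph{is} the theorem; nothing you have said explains why the block structures on the two sides are mutually consistent enough to be realised by a single unit system. The paper does not try to place all intervals at once. It first proves a lemma (``Lemma~A''): if $x_iy_j,\,x_ty_q\in E$ with $i<t$ and $q<j$, then $x_iy_q,\,x_ty_j\in E$ (the strong-ordering property, derived from~(5) by checking the three possible interleavings of $x_i,x_t,y_q,y_j$). It then inducts on $|V|$: assuming a unit representation with strictly increasing left endpoints for $G-y_s$, Lemma~A forces $\{x_i,\dots,x_r\}\cup\{y_q,\dots,y_s\}$ (with $i,q$ minimal and $r,s$ maximal as defined there) to induce a biclique, so the corresponding intervals have a common point $p>L(y_{s-1})$, and $I(y_s)=[p,p+1]$ finishes the step. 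Without Lemma~A (or an equivalent compatibility statement) your placement scheme has no reason to exist, so as written your $5\Rightarrow 1$ is a restatement of the goal rather than a proof.
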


\begin{proof} ($1 \implies 2$) A unit representation is a proper representation.

($2 \implies 3$) A proper interval bigraph is clearly a class-proper interval bigraph.

($3 \implies 4$) Place a copy of the class-proper interval representation for $G$ on each of the channels
$L_1$ and $L_2$.  With $\mathcal{I}_X$ and $\mathcal{I}_Y$ the intervals corresponding to $G$'s bipartition, create
$\ell_v$, for $v \in X$, by connecting $l(v)$ on $L_1$ with $r(v)$ on $L_2$, and for $v \in Y$, connect $r(v)$ on $L_1$ with
$l(v)$ on $L_2$. The line segments for vertices from the same partite set will not cross because the interval
representation is class proper, and $xy \in E(G)$, for $x \in X$, $y \in Y$, if and only if $r(x) > l(y)$ and $r(y) > l(x)$
if and only if $\ell_x$ crosses $\ell_y$. Thus, the line segments $\{\ell_v:v\in X \cup Y\}$ between the channels form a
permutation representation for $G$.

($4 \implies 5$) Given a segment representation of a bipartite permutation graph, consider the
vertex ordering $v_1, v_2, \dots, v_n$ given by the order of $p(v;1)$, the endpoints of the segments on channel $L_1$.
That is, $i < j$ if $p(v_i;1)$ is left of $p(v_j;1)$.
Consider $v_i,  v_j,  v_k$ with $i < j < k$ and $v_iv_k \in E(G)$.  The segments $\ell_{v_i}$ and $\ell_{v_k}$
intersect before reaching the other channel.  Since $\ell_{v_j}$ starts between $\ell_{v_i}$ and $\ell_{v_k}$,
it cannot reach the other side without intersecting one of them. It does not intersect the segment
for the vertex in its own partite set, so it intersects the other.  We have the vertex ordering desired.

($5 \implies 1$) Suppose the bipartite graph $G$'s vertices have been ordered $(v_1, v_2, \dots, v_n)$ in accord with the
statement \emph{5} and label the partite sets $X = \{v_{i_1}, v_{i_2}, \dots, v_{i_r}\}$ and $Y = \{v_{j_1}, v_{j_2}, \dots, v_{j_s}\}$
so that the indices respect the ordering (e.g., $v_{i_t}v_{j_{q}} \in E(G)$ implies $v_{i_t}v_{j_p}$ if $i_t < j_q < j_p$).
For convenience, drop the indices on the indices and put $v_{i_t} = x_t$ for $1 \leq t \leq r$, and $v_{j_q} = y_q$ for $1 \leq q \leq s$.
We construct a class-proper representation for $G$ by induction on $n = r+s$ creating the set of intervals
$\{I_{v} = [L(v), L(v)+1]: v \in V(G)\}$ with left end-points distinct and respecting the ordering; i.e., $L(v_1)<L(v_2) < \cdots < L(v_n)$.
If $\min\{r,s\} = 1$, the construction is obvious.  Also, since the intervals of isolated vertices are easy to incorporate we will assume
there are none.
Now, assume a class-proper representation has been created for $G$ induced on $\{x_1, \dots, x_r\}\cup \{y_1, \dots, y_{s-1}\}$,
switching the roles of $X$ and $Y$ if necessary.

{\bf Lemma A:} \emph{If $x_iy_j, x_ty_q \in E(G)$ and $i < t$ and $q < j$, then $x_iy_q, x_ty_j \in E(G)$.}

\emph{Proof of lemma:} This is essentially the observation that the ordering restricted to each partite set is the
strong ordering developed in \cite{BraSpiSte}.  By symmetry we may assume $x_i$ is first among $\{x_i, x_t, y_j, y_q\}$.
The possible orderings of these four elements are $(x_i,y_q,y_j,x_t), (x_i, y_q,x_t,y_j),$ and $(x_i,x_t,y_q,y_j)$.
In each case $x_iy_q, x_ty_j \in E(G)$ in virtue of the properties of the ordering.  This proves the lemma.

Define $q$ and $j$ to be the smallest and largest index, respectively, for which $x_ry_q \in E(G)$ and $x_ry_j \in E(G)$.
Define $i$ and $t$ to be the smallest and largest index, respectively, for which $x_iy_s \in E(G)$ and $x_ty_s \in E(G)$.
If $t < r$, then $q = s$ and $x_{t+1}, \dots, x_{r}$ are isolated (vis-a-vis $x_{t+1}y_{s-1} \in E(G) \implies x_ry_s \in E(G)$ by Lemma A);
hence $t =r$.  Similarly, if $j < s$, then $y_{j+1}, \dots, y_s$ are isolated; so $j =s$.
Note that by the properties of the ordering, $y_s$ is adjacent to each of $x_i, x_{i+1},\dots, x_r$ and $x_r$ is adjacent to each of $y_q, \dots, y_s$.
Then by Lemma A the graph induced on $\{x_i, \dots, x_r\} \cup \{y_q, \dots, y_s\}$ is a biclique and hence the intersection of
all the intervals corresponding to these vertices is not empty.  Furthermore, since all endpoints are distinct, this
intersection is not a point.  Therefore there is a point $p$ in this intersection with $p > L(y_{s-1})$, and
defining $I(y_s) = [p, p+1]$ completes the unit interval construction. \end{proof}\\

Theorem \ref{theorem:charPIBG} could be extended with at least seven more statements (cf. \cite{BroLunUIBG}) and
we would like to see a proof incorporating all (at least) eighteen statements characterizing cocomparability interval $2$-graphs into
a cycle of implications using no extraneous results.
Indeed, we have tried to produce such a proof, but statements \emph{10} and \emph{11} have been prohibitive.
So far statement \emph{11} has only tedious proofs with exhaustive case analysis or an appeal to Theorem \ref{Gallai}.
Statement \emph{10} so far requires an appeal to a result of Spinrad in \cite{Spi}.

\section{Autopsy of Theorem \ref{theorem:charPIBG}'s attempted extension to $k > 2$}

Although the classes of unit interval $k$-graphs and proper interval $k$-graphs are identical, see \cite{BroFle},
the analogues of statements in Theorem \ref{theorem:charPIBG} extend no further for proper interval $k$-graphs with $k>2$.
In this section we show that the statements 3, 4, 5, 8, 9, 10, and 11 of Theorem \ref{theorem:charPIBG}
do not necessarily hold for a proper interval $k$-graph, $k > 2$.

First we show that the vertices of any proper (or unit) interval $k$-graph can be ordered as in statement 5 of
Theorem \ref{theorem:charPIBG}, but the ordering does not characterize proper interval $k$-graphs.

\begin{theorem}\label{DBO} If a $k$-partite graph $G$ is a proper or unit interval $k$-graph, then $V(G)$ can be labeled
$v_1, v_2, \dots, v_n$ so that, for $i < j < k$, if $v_iv_k \in E(G)$, then $v_j$ is adjacent to each of $\{v_i,v_k\}$
in a different partite set than $v_j$.
\end{theorem}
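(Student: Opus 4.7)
\smallskip
\noindent\textbf{Proof plan.} The plan is to read the ordering directly off a proper interval $k$-representation. Start with such a representation $\{I(v) = [L(v), R(v)] : v \in V(G)\}$, in which no interval properly contains another (the unit case is a special case of the proper one). After a harmless perturbation of endpoints, or any fixed tie-breaking rule, assume the $2n$ endpoints are distinct. The first observation is that properness forces the left-endpoint ordering and the right-endpoint ordering to agree: if $L(u) < L(v)$ while $R(u) > R(v)$, then $I(u)$ would properly contain $I(v)$. So we may label the vertices so that $L(v_1) < L(v_2) < \cdots < L(v_n)$ and $R(v_1) < R(v_2) < \cdots < R(v_n)$.

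Next, suppose $i < j < k$ and $v_iv_k \in E(G)$. By the definition of interval $k$-graph, $v_i$ and $v_k$ belong to distinct partite sets and $I(v_i) \cap I(v_k) \neq \emptyset$; combined with $L(v_i) < L(v_k)$, this forces $L(v_k) \leq R(v_i)$. The next step is to check that $I(v_j)$ meets both $I(v_i)$ and $I(v_k)$. Indeed $L(v_i) < L(v_j) < L(v_k) \leq R(v_i)$ places $L(v_j)$ inside $I(v_i)$, and $L(v_j) < L(v_k) \leq R(v_i) < R(v_j)$ places $L(v_k)$ inside $I(v_j)$.

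To finish, recall that in an interval $k$-graph two vertices are adjacent precisely when their intervals intersect \emph{and} they lie in different classes. Therefore $v_j$ is adjacent to each of $\{v_i, v_k\}$ that lies in a partite set different from $v_j$'s, which is exactly the required condition. I do not foresee a real obstacle: the whole argument is a short geometric check once the left- and right-endpoint orderings are matched, and the only mildly delicate point is the tie-breaking needed to linearly order the endpoints, which is standard and does not disturb properness.
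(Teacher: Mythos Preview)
Your argument is correct and is essentially the same as the paper's: order the vertices by left endpoints, use properness to conclude the right-endpoint order agrees, and then check that $I(v_j)$ meets both $I(v_i)$ and $I(v_k)$ whenever $i<j<k$ and $v_iv_k\in E(G)$. The only difference is cosmetic---you spell out the tie-breaking on endpoints and pinpoint the intersection points---so there is nothing substantively new to compare.
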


\begin{proof} Suppose $G$ is a proper or unit interval $k$-graph with interval representation $\{I(v)= [\ell(v),r(v)]: v\in V(G)\}$.
Label the vertices $v_1, v_2, \dots, v_n$ so that $\ell(v_i) < \ell(v_j)$ if and only if $i < j$.
Suppose $v_iv_k \in E(G)$, where $i < k$, and consider $v_j$ with $i < j < k$.
Since $\ell(v_i) < \ell(v_j) < \ell(v_k)$ and no interval properly contains another, we
know $r(v_i) < r(v_j) < r(v_k)$ and $r(v_i) \geq \ell(v_k)$ since $I(v_i) \cap I(v_k) \neq \O$.
So $I(v_i) \cap I(v_j) \neq \O$ and $I(v_j) \cap I(v_k) \neq \O$.
Therefore $v_j$ is adjacent to whichever of $\{v_i, v_k\}$ is in a different partite set than $v_j$. \end{proof}\\

\noindent The graph in Figure \ref{cockandballs} is not a unit or proper
interval $k$-graph (straightforward to verify, or see \cite{BroFle} or \cite{Bro}), but is labeled in accord with Theorem \ref{DBO}.

\begin{figure}[ht]
\begin{center}
\includegraphics[scale=0.4]{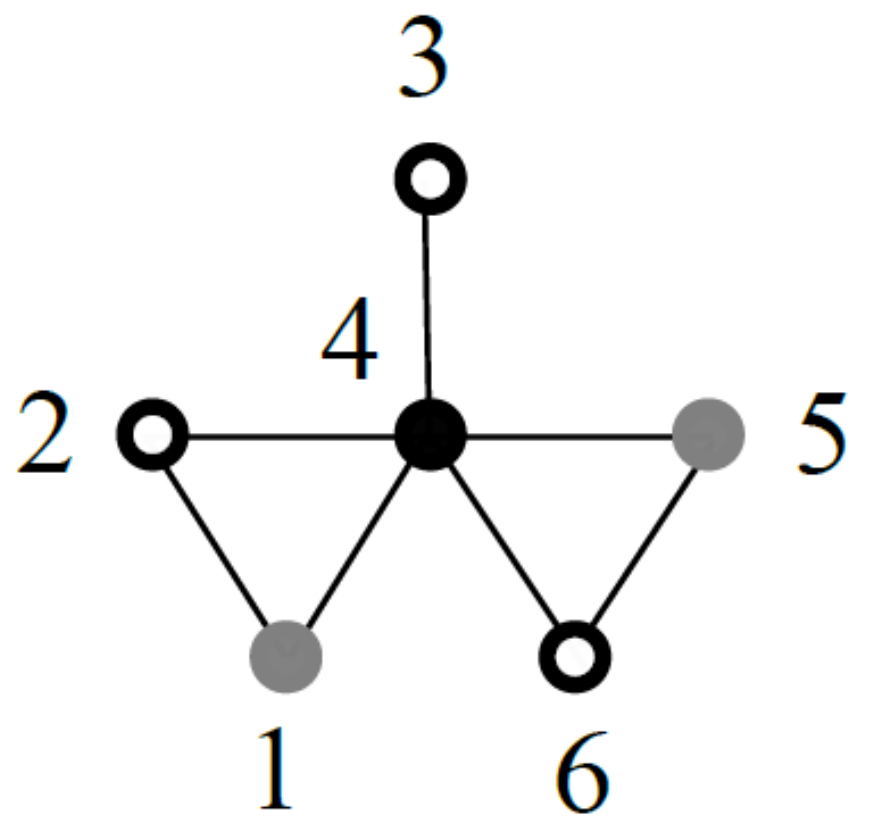}
\caption{An obstruction for a proper interval $k$-graph with vertices labeled in accord with Theorem \ref{DBO}.}\label{cockandballs}
\end{center}
\end{figure}

The next theorem follows from a result of Corneil and others \cite{CorOlaSte} and since proper interval $k$-graphs
are asteroidal triple free \cite{Bro}, but we give a
short proof following from the ordering of Theorem \ref{DBO}.
A \emph{dominating pair} of vertices in a graph $G$ is a pair of vertices that belong to a path $P$ of $G$ such that
every vertex of $G$ belongs to $P$ or is adjacent to a vertex of $P$.

\begin{theorem}\label{dom_pair_in_propIkG} If $G$ is a connected proper or unit interval $k$-graph then $G$ has a dominating pair of vertices.
\end{theorem}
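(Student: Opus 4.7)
My plan is to show that the pair $\{v_1, v_n\}$ itself serves as a dominating pair in $G$, where $v_1, v_2, \ldots, v_n$ is the labeling supplied by Theorem \ref{DBO}, with any path from $v_1$ to $v_n$ as the witness. Since $G$ is connected, such a path $P : v_1 = w_0, w_1, \ldots, w_q = v_n$ exists, and the goal is to show that every vertex off $P$ has a neighbor on $P$.

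Fix an arbitrary $v_j \notin V(P)$; necessarily $1 < j < n$. Look at the index sequence $\mathrm{ind}(w_0) = 1, \mathrm{ind}(w_1), \ldots, \mathrm{ind}(w_q) = n$. None of these equals $j$ (since $v_j \notin V(P)$), the sequence starts strictly below $j$, and it ends strictly above $j$, so a discrete intermediate value step yields a consecutive pair $w_{s-1}, w_s$ on $P$ with $\mathrm{ind}(w_{s-1}) < j < \mathrm{ind}(w_s)$. Since $w_{s-1} w_s \in E(G)$, Theorem \ref{DBO} applied to the triple of indices $(\mathrm{ind}(w_{s-1}), j, \mathrm{ind}(w_s))$ asserts that $v_j$ is adjacent to each of $\{w_{s-1}, w_s\}$ lying in a partite set different from $v_j$'s. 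Because $w_{s-1}$ and $w_s$ lie in different partite sets from each other (they are adjacent), at least one of them qualifies, so $v_j$ has a neighbor on $P$.

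This shows that every vertex of $G$ is on $P$ or adjacent to a vertex of $P$, so $\{v_1, v_n\}$ is a dominating pair. The proof has essentially one nontrivial step — the observation that any path from $v_1$ to $v_n$ must contain a single edge straddling the index $j$ of any off-path vertex — after which the ordering property of Theorem \ref{DBO} does all the remaining work. I do not foresee any real obstacles; the elegance of the argument comes from the fact that the labeling of Theorem \ref{DBO} is already strong enough to encode the dominating-pair structure directly, with no need to return to the interval representation.
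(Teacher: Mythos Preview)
Your proof is correct and follows essentially the same approach as the paper: both show that $\{v_1,v_n\}$ is a dominating pair by taking any $v_1,v_n$-path and, for an off-path vertex $v_j$, locating an edge of the path whose endpoint indices straddle $j$, then invoking Theorem~\ref{DBO}. Your explicit ``discrete intermediate value'' step is, if anything, a bit more careful than the paper's phrasing, which tacitly assumes one can find $k$ with $i_k \le j \le i_{k+1}$ without spelling out why.
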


\begin{proof} Suppose $G$ is a connected proper or unit interval $k$-graph and that the vertices have been labeled $v_1,v_2, \dots, v_n$ as in
Theorem \ref{DBO}.  We claim $\{v_1,v_n\}$ is a dominating pair.
Since $G$ is connected there is a path between $v_1$ and $v_n$; suppose the path is
$P =(v_1=v_{i_0}, v_{i_1}, v_{i_2}, \dots, v_{i_p} = v_n)$.  For each $k \in \{0, 1, 2, \dots, p-1\}$ and any $j$ satisfying $i_k \leq j \leq i_{k+1}$,
since $v_{i_{k}}v_{i_{k+1}} \in E(G)$, $v_j$ is adjacent to at least one of $v_{i_k}$ or $v_{i_{k+1}}$ because $v_{i_k},v_j,$ and $v_{i_{k+1}}$
belong to at east two different partite sets. We have proved every vertex of $G$ either belongs to $P$ or is adjacent to a vertex of $P$.
\end{proof}\\

\noindent The converse of Theorem \ref{dom_pair_in_propIkG} is not true since the vertices in the graph of Figure \ref{cockandballs}
labeled $2$ and $5$ are a dominating pair.
It is an open problem to determine what property characterizes those graphs which have a dominating pair of vertices.

Let $F$ be the graph of Figure \ref{cockandballs}.  We use $F$ and $F - 3$ (the graph $F$ with vertex $3$ deleted) 
to show that the class of class-proper interval $3$-graphs
is distinct from the classes of proper (or unit) interval $3$-graphs, and permutation graphs.  
See Figure \ref{fig:counterex_2to3}, first row, in which $F$, 
a class-proper representation for is is given as well as a permutation representation 
(the interval for $4$ contains that for $5$, but they are from different classes).  
The graph $F -3$
shows there are interval $3$-graphs whose complements are not circular arc graphs, Figure \ref{fig:counterex_2to3}, third row.
The second row of Figure \ref{fig:counterex_2to3} shows that $F$ is a cocomparability graph since
$\overline{F}$ has been given a transitive orientation shown via $\overline{F}_{tr}$.  

\begin{figure}[ht]
\begin{center}
\includegraphics[scale=0.4]{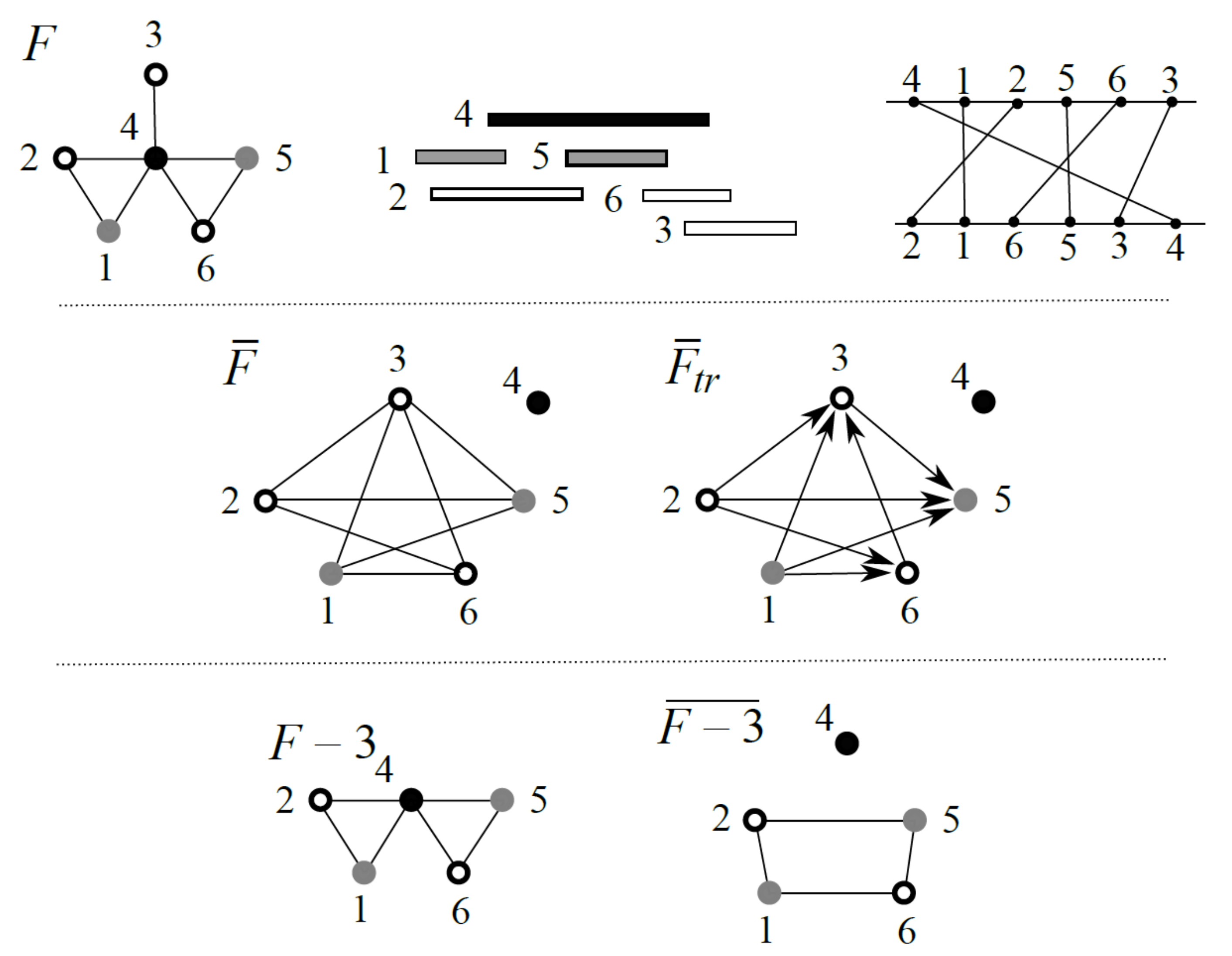}
\caption{Unit (or proper) interval $3$-graphs are not equivalent to class-proper interval $3$-graphs, permutation graphs,
are not equivalent to cocomparability graphs of posets of dimension three, and are not the complements of
(proper) circular arc graphs.}\label{fig:counterex_2to3}
\end{center}
\end{figure}

The vertices in the complement of a cocomparability interval $k$-graph can be covered with $k$ cliques and hence
any poset corresponding to a transitive orientation can be partitioned into $k$ chains.  So the width of the poset
is at most $k$ and by a theorem of Hiraguchi, the dimension of the corresponding poset is less than or equal to $k$.
But the converse is not true; that is, a poset of dimension less than or equal to $k$ does not necessarily have
an interval $k$-graph as an incomparability graph.  For example the graph in Figure \ref{fig:func_not_IkG}, $\overline{C}_6$,
is the incomparability graph of the $3$-crown in Figure \ref{fig:3-crown}, which has dimension three.
Furthermore, since $\overline{C}_6$ has no asteroidal triple, the analog to statement 11 of Theorem \ref{theorem:charPIBG} 
does hold for interval $k$-graphs, $k > 2$.  

We have achieved the goal of this section: to show that essentially no statement of Theorem \ref{theorem:charPIBG} can be extended 
to unit or proper interval $k$-graphs, for $k > 2$. 

\begin{figure}
\begin{center}
\includegraphics[scale=0.25]{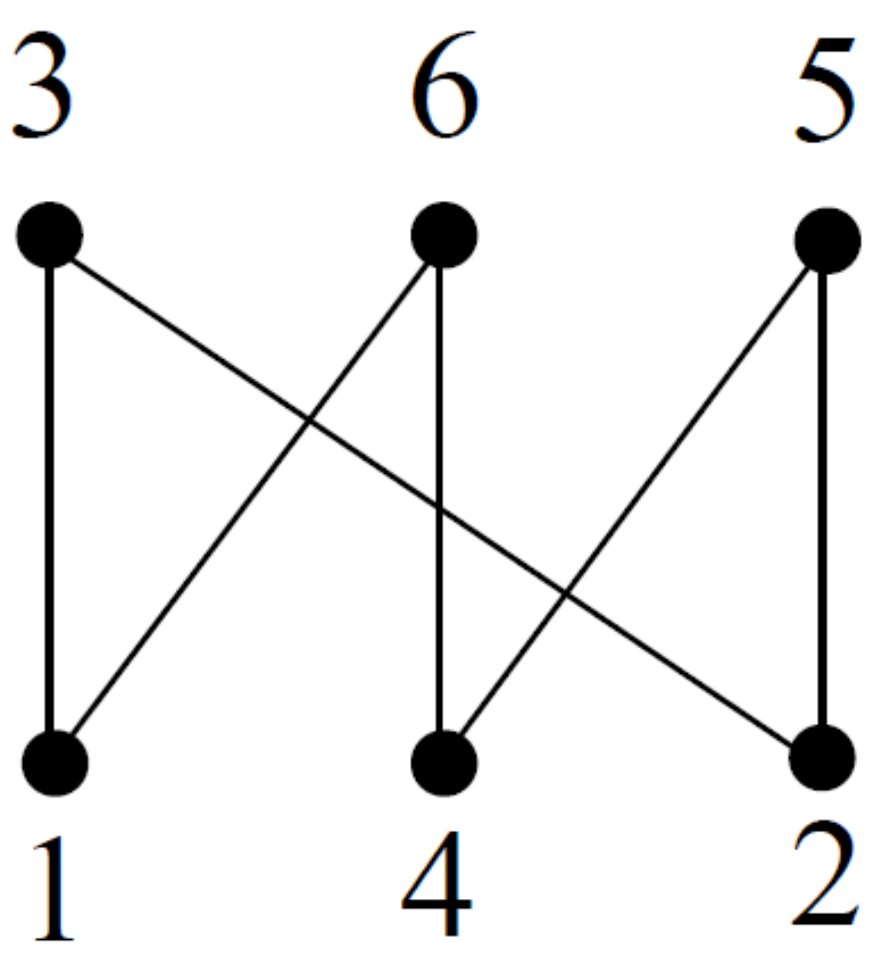}
\caption{The Hasse diagram of a poset of dimension $3$ that doesn't correspond to any cocomparability interval $k$-graph.}\label{fig:3-crown}
\end{center}
\end{figure}

\section{Interval $k$-Graphs and Orders, $k \geq 3$}

In this section we focus attention on $k \geq 3$ and will describe the structure of
orders corresponding to class-proper interval $k$-graphs.  We give two proofs that class-proper interval $k$-graphs
are cocomparability graphs, one using Theorem \ref{func=coco}.

\begin{theorem}\label{transCPIkG} If $G = (V,E)$ is a class-proper interval $k$-graph, for $k \geq 3$,
then $G$ is a cocomparability graph. \end{theorem}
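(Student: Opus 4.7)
The plan is to construct a transitive orientation of $\overline{G}$ directly from a class-proper representation $\mathcal{I}$ of $G$. After a harmless perturbation I may assume all interval endpoints are distinct. Every edge $\{u,v\}$ of $\overline{G}$ is a non-edge of $G$, and so falls into one of two mutually exclusive cases: (A) the intervals $I_u$ and $I_v$ are disjoint, or (B) they intersect but $u,v$ lie in the same interval class (the only way intersecting intervals can fail to be adjacent in $G$). Define the orientation $\sigma$ by orienting $u \to v$ in case (A) whenever $R(u) < L(v)$, and in case (B) whenever $L(u) < L(v)$. The class-proper hypothesis is used here precisely to guarantee that in case (B) the inequality $L(u) < L(v)$ forces $R(u) < R(v)$ as well; otherwise $I_v$ would properly contain $I_u$ within their common class.

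The crux is then to verify that $\sigma$ is transitive: if $u \to v$ and $v \to w$, then $\{u,w\}$ is a non-edge of $G$ and $u \to w$. I will do this by examining the four combinations one obtains by labeling each of the hops $u \to v$ and $v \to w$ as case (A) or case (B). In each combination, chaining the endpoint inequalities---and using the class-proper observation that same-class $L(\cdot) < L(\cdot)$ propagates to $R(\cdot) < R(\cdot)$---yields either $R(u) < L(w)$ (so $I_u$ is disjoint from and to the left of $I_w$, putting the pair $\{u,w\}$ into case (A)) or else $u,v,w$ share a common class with $L(u) < L(w)$ (putting $\{u,w\}$ into case (A) or case (B), depending on whether $I_u$ meets $I_w$). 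In every subcase $u \to w$ is forced in $\sigma$, and $\{u,w\}$ is a non-edge of $G$. Transitivity of $\sigma$ then yields that $G$ is a cocomparability graph, either directly from the definition or via Theorem~\ref{Gallai}.

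For the alternative proof via Theorem~\ref{func=coco}, the idea is to convert the class-proper representation into a family of continuous curves. The class-proper condition totally orders the intervals within each class by left endpoint in the same order as by right endpoint, so the intervals of a single class can be realized as pairwise non-crossing curves inside a strip assigned to that class; curves from distinct strips are then drawn so as to cross exactly when the underlying intervals meet, giving a function-graph representation of $G$ and hence (by Theorem~\ref{func=coco}) the cocomparability conclusion. In either approach, the main obstacle is combinatorial bookkeeping---honest tracking of the case analysis across mixed hops in the transitivity check, or equivalently, simultaneously enforcing non-crossing within a class and realizing all cross-class intersections in the function-graph picture. In both, the class-proper assumption is precisely what makes the argument go through; its failure is exactly what allows the asteroid obstructions flagged in the introduction.
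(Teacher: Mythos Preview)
Your proposal is correct and follows essentially the same approach as the paper. The paper likewise orients $\overline{G}$ from a class-proper representation with distinct endpoints---using $r(u)<l(v)$ for disjoint pairs and $r(u)<r(v)$ for same-class intersecting pairs (equivalent to your left-endpoint rule by class-properness)---and verifies transitivity by an endpoint chase; its alternative proof also builds an explicit function representation on $k{+}1$ parallel lines, matching your strip idea.
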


\begin{proof} Assume $G$ is a class-proper interval $k$-graph with representation such that all interval endpoints are
distinct. We now use $I_v = [l(v),r(v)]$ to denote the interval for vertex $v$.
for vertex $v$. Index $V(G)$ as $v_1, v_2, \dots, v_n$ so that $i < j$ if and only if $r(v_i) < r(v_j)$.  Now orient the
edge $v_av_b \in E(\overline{G})$ as $v_a \to v_b$ if $r(v_a) < l(v_b)$ or $r(v_a) < r(v_b)$ and $v_a, v_b$ belong to
the same interval class.

We claim that this orientation is transitive. First, suppose $v_a \to v_b$, $v_b \to v_c$ and $v_av_c \not\in E(\overline{G})$.
We have $r(v_a) < r(v_b)<r(v_c)$ and so $l(v_c) < r(v_a)$ in order for $I_{v_a} \cap I_{v_c} \neq \O$.  But this means
$I_{v_b} \cap I_{v_c} \neq \O$, and so $I_{v_b},I_{v_c}$ belong to the same interval class.  Hence $I_{v_a}, I_{v_b}$ belong to
different interval classes. Now, unless $l(v_b) < l(v_c)$, the representation is not proper and $v_av_b \in E(G)$, a contradiction.

Now suppose $v_a \to v_b \to v_c \to v_a$ is assigned to $\overline{G}$.  Then $r(v_a) < r(v_b) < r(v_c) < r(v_a)$,
clearly a contradiction.  Therefore the orientation is transitive.

(\emph{Alternatively:}) We prove this via constructing a function representation for $G$; since function graphs
are cocomparability graphs, the result follows.

Begin with class-proper interval $k$-graph $G=(V,E)$ and find its interval representation
$\mathcal{I} = \{\mathcal{I}_1 \cup \mathcal{I}_2 \cup \cdots \cup \mathcal{I}_k\}$ in which each interval endpoint is distinct.
Now take $k+1$ horizontal lines
$L_0, L_1, \dots, L_k$ placed some distance apart from one another and place $\mathcal{I}$
on each one.  Define a function line $f_v$ corresponding to $v \in V$ with $I_v \in \mathcal{I}_i$.
The construction differs according to $i \in \{1, \dots, k-2\}$, $i = k-1$, and $i=k$.  Each function line is
defined by $k$ line segments, one with negative slope, one with positive slope, and the rest vertical.
For $i \in \{1,2,\dots, k-2\}$, connect via line segments $l(v)$ on $L_i$ to $r(v)$ on $L_{i+1}$ and $r(v)$ on $L_{i+1}$
to $l(v)$ on $L_{i+2}$, and use vertical line segments connecting $l(v)$s between the horizontal lines where $f_v$ has not been
defined.  For $i = k-1$, construct $f_v$ with a line segment connecting $l(v)$ on $L_{k-1}$ to $r(v)$ on $L_k$ and a line segment
connecting $r(v)$ on $L_0$ to $l(v)$ on $L_1$, then using vertical line segments to complete $f_v$.  For $i=k$, connect $l(v)$
on $L_0$ to $r(v)$ on $L_1$ and $r(v)$ on $L_1$ to $l(v)$ on $L_2$ and then vertical line segments for the rest of $f_v$.
See Figure \ref{fig:IkGfunction} for a depiction of this construction with $k=3$.

It is easy to verify that, for $u, v \in V$, if $uv \not\in E$, then $f_u$ and $f_v$ do not intersect
because $I_u,I_v \in \mathcal{I}_i$, or $I_u$ and $I_v$ are in different classes and do not intersect.
If $uv \in E$, then $f_u$ and $f_v$ intersect twice.  Now, orient $\overline{G}$ via $u \to v$ if and only if
$f_u(y) < f_v(y)$ for all $y$ in the domain (on the vertical axis) of the functions. Clearly, this is a transitive
orientation of $\overline{G}$; in fact it gives the same orientation as the one obtained above. \end{proof}\\

\begin{figure}
\begin{center}
\psfrag{I1}{{\small$\mathcal{I}_1$}}
\psfrag{I2}{{\small$\mathcal{I}_2$}}
\psfrag{I3}{{\small$\mathcal{I}_3$}}
\psfrag{L0}{$L_0$}
\psfrag{L1}{$L_1$}
\psfrag{L2}{$L_2$}
\psfrag{L3}{$L_3$}
\includegraphics[scale=0.6]{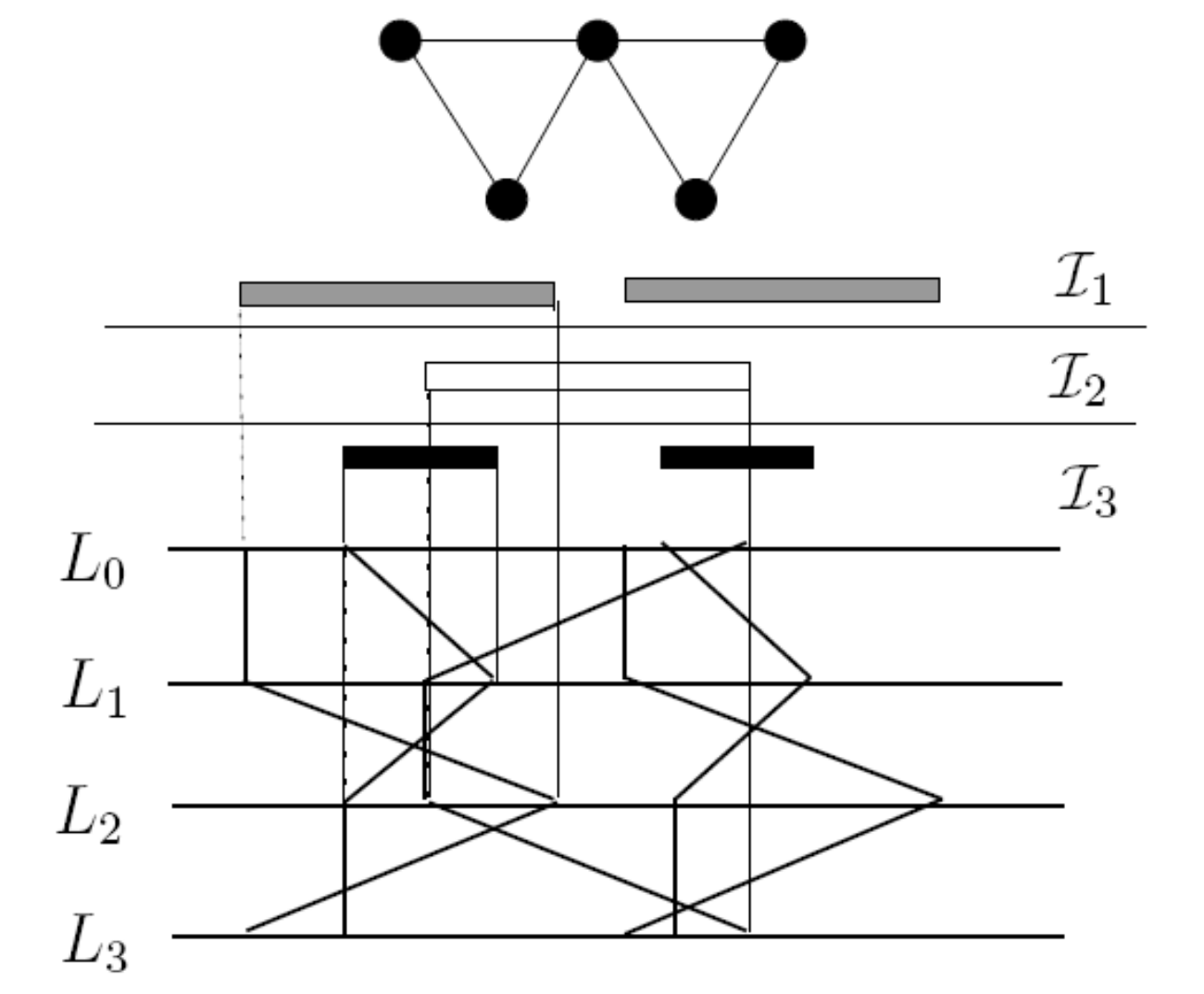}
\caption{A function representation for a class-proper interval $k$-graph.}\label{fig:IkGfunction}
\end{center}
\end{figure}

We now describe a vertex elimination scheme for class-proper interval $k$-graphs which in a sense generalizes the notion of consecutive
orderability of maximal cliques in interval graphs (cf. \cite{FulGro}).
Let $G$ be a class-proper interval $k$-graph with class-proper representation $\mathcal{I}$ in which all interval endpoints are distinct.
Order the vertices of $G$ as $v_1, v_2, \dots, v_n$ so that $r(v_i) < r(v_j)$ if and only if $i < j$. Now, observe that in $G$ all intervals
containing $r(v_1)$, including $I_{v_1}$, induce a complete multipartite subgraph in $G$.  Deleting the interval $I_{v_1}$ produces a
class-proper representation for $G - v_1$, and now the intervals containing $r(v_2)$ induce a complete multipartite subgraph in
$G-v_1$.  Clearly, this process may
be repeated so that $N_{H}[v_i]$ is complete multipartite in $H = G \setminus \{v_1, v_2, \dots, v_{i-1}\}$.  We record this observation below.

\begin{proposition}\label{IkGgraphelim} Let $G$ be an $n$-vertex class-proper interval $k$-graph with class-proper representation
$\{I_{v_i}: 1 \leq i \leq n\}$ where $V(G) = \{v_1, v_2, \dots, v_n\}$ with $r(v_i) < r(v_j)$ if and only if $i < j$.
Then, for $1\leq i \leq n$, $N_H[v_i]$ is complete multipartite in $H = G \setminus \{v_1, v_2, \dots, v_{i-1}\}$.
\end{proposition}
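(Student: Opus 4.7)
The plan is to verify the observation already made informally just before the proposition, namely that at each stage of the elimination the intervals still present that meet $r(v_i)$ form a complete multipartite subgraph, and to do so by directly computing $N_H[v_i]$ from the interval data.

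First I would fix $i$ and set $H = G \setminus \{v_1,\dots,v_{i-1}\}$. The restriction of $\mathcal{I}$ to $V(H)$ is again a class-proper interval $k$-representation (with distinct endpoints), and within this restricted representation $v_i$ has the smallest right endpoint. Next I would show that every $u \in N_H[v_i]$ satisfies $r(v_i) \in I_u$. For $u = v_i$ this is immediate; for $u$ adjacent to $v_i$ in $H$, adjacency forces $I_u \cap I_{v_i} \neq \emptyset$, and since $r(u) > r(v_i)$ by the ordering, we must have $l(u) \le r(v_i) \le r(u)$, so $r(v_i) \in I_u$.

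Having established that every vertex in $N_H[v_i]$ corresponds to an interval containing the common point $r(v_i)$, I would finish by checking the two defining properties of a complete multipartite graph on the class partition induced on $N_H[v_i]$. If $u,w \in N_H[v_i]$ lie in different interval classes, then $I_u \cap I_w \neq \emptyset$ (both contain $r(v_i)$) and they belong to different classes, so $uw \in E(H)$. If $u,w \in N_H[v_i]$ lie in the same class, then by definition of an interval $k$-graph there is no edge between them in $G$ and hence none in $H$. These two facts together say precisely that $N_H[v_i]$ is complete multipartite, with parts given by the nonempty intersections $\mathcal{I}_j \cap N_H[v_i]$.

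I do not expect a serious obstacle; the only thing one has to be careful about is the boundary case $u = v_i$ in the step that puts $r(v_i)$ inside every interval of $N_H[v_i]$, and the observation that the class partition of $G$ restricts to a class partition of $H$ so that ``same class'' and ``different class'' are inherited unchanged. Once those are noted, the statement follows directly from the definitions without induction or any appeal to earlier theorems.
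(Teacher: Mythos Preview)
Your proposal is correct and follows essentially the same approach as the paper: the paper simply observes that, after deleting $v_1,\dots,v_{i-1}$, the remaining intervals containing the point $r(v_i)$ induce a complete multipartite subgraph, and records the proposition without further detail. Your write-up is actually more careful than the paper's sketch, since you explicitly verify that every $u\in N_H[v_i]$ has $r(v_i)\in I_u$ and then check both halves of the complete-multipartite condition, whereas the paper leaves these as ``observe'' and ``clearly.''
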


With $P = (V, \prec)$ the strict partial order obtained from a class-proper interval $k$-graph
$G = (V,E)$ and vertices ordered as in the
proposition above, we have $v_i \prec v_j \implies i < j$.  Also, translating the above proposition into ordered set parlance,
we have Corollary \ref{IkGorder_elim}.  Denote by $\overline{N}_P(x)$ the set of elements incomparable with $x$ in $P$.
When we say an order (or suborder) has a \emph{decomposition into chains}, we mean that the elements of the order (or suborder)
may be partitioned into chains $C_1, C_2,  \dots, C_m$, with $x \in C_i$ incomparable to $y \in C_j$ whenever $i \neq j$.

\begin{corollary}\label{IkGorder_elim} Let $P=(V,\prec)$ be a strict partial order whose incomparability graph is a class-proper interval
$k$-graph $G = (V,E)$ and $V$ is indexed as in Proposition \ref{IkGgraphelim}.
Then, with $P' = P \setminus \{v_1, v_2, \dots, v_{i-1}\}$, $v_i$ is minimal in $P'$ and
$\overline{N}_{P'}(v_i)$ can be decomposed into chains.
\end{corollary}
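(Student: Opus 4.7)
The plan is to read the corollary as a translation of Proposition \ref{IkGgraphelim} into poset language, using the transitive orientation of $\overline{G}$ constructed in Theorem \ref{transCPIkG} to pin down the direction of comparability. First I would record that with the indexing inherited from Proposition \ref{IkGgraphelim}, the orientation of Theorem \ref{transCPIkG} sends every comparable pair from smaller to larger right endpoint; thus $v_a \prec v_b$ implies $a < b$.

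From this observation the minimality of $v_i$ in $P' = P \setminus \{v_1, \ldots, v_{i-1}\}$ is immediate: if some $v_j \in P'$ satisfied $v_j \prec v_i$, then $j < i$ would contradict $v_j \in P'$. So the first half of the statement is essentially free from the indexing convention.

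For the decomposition claim I would identify $\overline{N}_{P'}(v_i)$ with the open neighborhood of $v_i$ in $H = G \setminus \{v_1,\ldots,v_{i-1}\}$, since incomparability in $P'$ is adjacency in the incomparability graph restricted to $P'$, which is $H$. By Proposition \ref{IkGgraphelim}, $N_H[v_i]$ is complete multipartite in $H$, so deleting $v_i$ leaves $\overline{N}_{P'}(v_i) = N_H(v_i)$ complete multipartite as well, with partite sets given by the intersections of the interval classes with $N_H(v_i)$.

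Restricting $P'$ to $\overline{N}_{P'}(v_i)$, its comparability graph is the complement of this complete multipartite graph, i.e.\ a disjoint union of cliques, one per partite class. Because $P'$ is transitive, each such clique in the comparability graph is automatically a chain in the induced order, and elements in distinct cliques lie in distinct partite sets and are therefore incomparable. This is exactly a decomposition of $\overline{N}_{P'}(v_i)$ into chains, completing the proof. I do not anticipate a genuine obstacle here; the only point that requires a sentence of care is the translation between the multipartite structure of the incomparability graph and the disjoint-clique structure of the comparability graph, after which transitivity of $\prec$ does the rest.
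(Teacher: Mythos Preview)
Your proposal is correct and is exactly the approach the paper takes: the paper itself offers no formal proof beyond the sentence ``translating the above proposition into ordered set parlance,'' preceded by the observation that $v_i \prec v_j \implies i<j$; you have simply supplied the details of that translation. The only point worth noting is that the paper's ``decomposition into chains'' requires elements of distinct chains to be mutually incomparable, which you correctly verify via the complete multipartite structure.
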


\noindent\emph{Remark.} In Figure \ref{fig:cocoIkG_notPIG} an ordering of $V(M)$ which corresponds to the
prescriptions of Proposition \ref{IkGgraphelim} is $v_1 = a, v_2 = x, v_3 = p, v_4 = y, v_5 = b, v_6 = q$.
Note that this order may be obtained also from $P$ via Corollary \ref{IkGorder_elim}.
Begin by finding a minimal element whose set of incomparable elements can be partitioned into chains with no comparabilities between chains.
For example $a$ has $\overline{N}_{P}(a) = \{x,y,p\}$ with $C_1$ being the 2-chain $x \prec y$, and $C_2$ the 1-chain $p$.
So $a$ is a suitable first element.
The next element in the ordering must be $x$, since $\overline{N}_{P-\{a\}}(p) = \{x,y,b\}$ and $x \prec y$, $x \prec b$, and $y \| b$.\\

We now proceed from the other perspective to the end of characterizing interval $k$-orders.
However, and in distinction to the $k=2$ case, the assignment of elements of the order to classes
(vertices of the incomparability graph to color/interval classes) must be done with care, since the class
assignment (coloring of the incomparability graph) is not forced in the $k > 2$ circumstance.
Figure \ref{fig:M_troublesome} is intended to illustrate this problem.
In spite of this we will prove the following theorem after we prove that an appropriate color/interval assignment can be found.

\begin{theorem}\label{IkOimpliesCPIkG} Let $P = (V, \prec)$ be a strict order with $V$ labeled $v_1, v_2, \dots, v_n$ so that $v_i$ is minimal in
$P'=P \setminus \{v_1, v_2, \dots, v_{i-1}\}$ and $\overline{N}_{P'}(v_i)$ can be decomposed into chains.  Then the incomparability
graph of $P$ is a class-proper interval $k$-graph. \end{theorem}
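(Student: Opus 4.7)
The plan is to construct the representation directly from the given ordering. Set $r(v_i) = i$ so that right endpoints respect the labeling, and for each $i$ introduce $A_i = \{j < i : v_j \| v_i\}$, $B_i = \{j < i : v_j \prec v_i\}$, $j^*_i = \min A_i$ (with the convention $\min\emptyset = \infty$), $S_i = \{j \in B_i : j > j^*_i\}$, and $T_i = \{v_i\} \cup \{v_j : j \in S_i\}$. I would choose $l(v_i) = j^*_i - \epsilon_i$ when $A_i \neq \emptyset$ and $l(v_i) = i - \epsilon_i$ otherwise, with tiny $\epsilon_i > 0$ keeping all endpoints distinct. The resulting interval $I_{v_i}$ meets $I_{v_j}$ (for $j < i$) precisely when $j \in A_i \cup S_i$: for $j \in A_i$ this produces the required edge, while for $j \in S_i$ the pair $v_i, v_j$ is non-adjacent in $G$ (since $v_j \prec v_i$) and so must lie in a common interval class. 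The task therefore reduces to exhibiting a proper coloring $c$ of $G$ that is constant on every $T_i$.

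Such a coloring exists provided each equivalence class of the transitive closure of ``$v_a$ and $v_b$ lie in a common $T_i$'' is an independent set of $G$, i.e.\ a chain of $P$, and my plan is to establish this in two steps. First (call it Lemma A), each $T_i$ is itself a chain of $P$: for any $v_{j_1}, v_{j_2} \in S_i$, minimality of $v_{j^*_i}$ in $P_{j^*_i}$ together with transitivity forces $v_{j^*_i} \| v_{j_1}$ and $v_{j^*_i} \| v_{j_2}$ (otherwise $v_{j^*_i} \prec v_i$, contradicting $j^*_i \in A_i$), so both appear in the chain decomposition of $\overline{N}_{P_{j^*_i}}(v_{j^*_i})$ guaranteed by hypothesis; since $v_{j_1}, v_{j_2}, v_i$ all belong to this decomposition with $v_{j_1}, v_{j_2} \prec v_i$, the three lie in the same chain, and $v_{j_1}$ is comparable to $v_{j_2}$.

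The hard part is the second step (Lemma B): each transitive-closure equivalence class is a chain in $P$. I would argue by induction on the step index, processing the $T_i$ in order and showing that each merge preserves the chain property. The subtle situation is when a merge would unite two vertices $v_a, v_b$ previously in distinct classes with $v_a \| v_b$ in $P$; the plan is to take a minimal sequence of $T$-memberships connecting $v_a$ to $v_b$ and, by applying the chain-decomposition hypothesis at the step indexed by $\min A_i$ for a suitably chosen $i$ along this sequence, produce two chains in that decomposition sharing a pair of comparable elements, contradicting the ``no inter-chain comparabilities'' requirement. Once Lemmas A and B are in hand, giving each equivalence class its own color yields the desired proper coloring, and a direct verification completes the proof: intersections together with differing colors capture exactly the edges of $G$, and the representation is class-proper provided the $\epsilon_i$ are chosen so that within any class the $l$-values respect the $r$-value order, which is possible because same-colored vertices are pairwise comparable in $P$.
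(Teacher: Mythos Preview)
Your interval construction is the same one the paper uses (with $\mu(v_i)$ in place of your $j^*_i$ and $1-i/n$ in place of $\epsilon_i$), and your reduction of the problem to finding a proper coloring that is constant on every $T_i$ is exactly right; your Lemma~A is correct and matches the paper's reasoning. The verifications you outline at the end (intersection $\Leftrightarrow$ edge, class-properness via monotone left endpoints within a class) also go through.

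The gap is Lemma~B. What you wrote is a hope, not an argument: you never specify \emph{which} $i$ along the connecting sequence to use, nor how the chain-decomposition hypothesis at $v_{j^*_i}$ actually yields the asserted contradiction. In particular, when you process $T_m$ and thereby merge the previously-built equivalence classes $C_1,C_2$ containing two elements of $S_m$, the difficulty is not elements of $S_m$ themselves (those are handled by Lemma~A) but the \emph{other} elements already sitting in $C_1$ and $C_2$; you give no mechanism for showing that an arbitrary $v_a\in C_1$ is comparable to an arbitrary $v_b\in C_2$, and ``pick a suitable $i$ and derive a contradiction'' does not constitute one. This is precisely the step that carries all the weight, and the paper devotes its main Claim to it.

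For comparison, the paper does \emph{not} try to prove directly that the $T_i$-equivalence classes are chains. Instead it starts from an \emph{arbitrary} width-$k$ chain covering of $P$ and repairs it iteratively: at the least index $i$ where some chain of the decomposition of $\overline{N}_{P'}(v_i)$ straddles two classes $X,Y$, it finds the first bad pair $x\prec y$ along that chain and performs an explicit surgery, replacing $X,Y$ by $X'=x_1\prec\cdots\prec x\prec y\prec\cdots$ and $Y'=y_1\prec\cdots\prec y_{l-1}\prec x_{j+1}\prec\cdots$, then checks via a short case analysis that no earlier index $r<i$ is damaged. This constructive swap argument is the missing idea in your proposal; once such a covering exists, each $T_j$ lies inside a single chain of the decomposition of $\overline{N}_{P'}(v_{j^*_j})$ and hence in a single class, which is exactly your Lemma~B.
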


To prove this theorem, we give a class-proper interval representation derived from an order satisfying
the hypothesis of Theorem \ref{IkOimpliesCPIkG} as follows.
Define $\mu(v_i) = \min\{j: v_i \| v_j\}$, and put $I_{v_i} = \left[\mu(i) - \left(1-\frac{i}{n}\right),i\right]$.
Note that $i \in \{j: v_i \| v_j\}$ since $\prec$ is strict, and so $I_{v_i}$ is well-defined.
We may assume, appealing to Dilworth's theorem \cite{Dil} and the fact that cocomparability graphs are perfect
(which follows essentially from Dilworth's theorem and the fact that the class of perfect graphs is closed under complementation, see \cite{Lov}),
that $\mathrm{width}(P) = \chi(G) =k$, and that the independent sets of $G$ correspond to the $k$ chains that cover $P$.
Now, referring to Figure \ref{fig:M_troublesome}, the order $\mathcal{M}$ can be partitioned into chains in three different ways.
One of these partitions together with the above interval representation construction will not give back the order desired.
In particular $\mathcal{P}_3$, the third covering of $\mathcal{M}$ in the figure,
yields $v_3$ incomparable to the rest of the order when we should have $v_3 \prec v_5$.
Consequently we must choose the covering carefully, whence the following claim.\\

\begin{figure}[ht!]
\begin{center}
\includegraphics[scale=0.4]{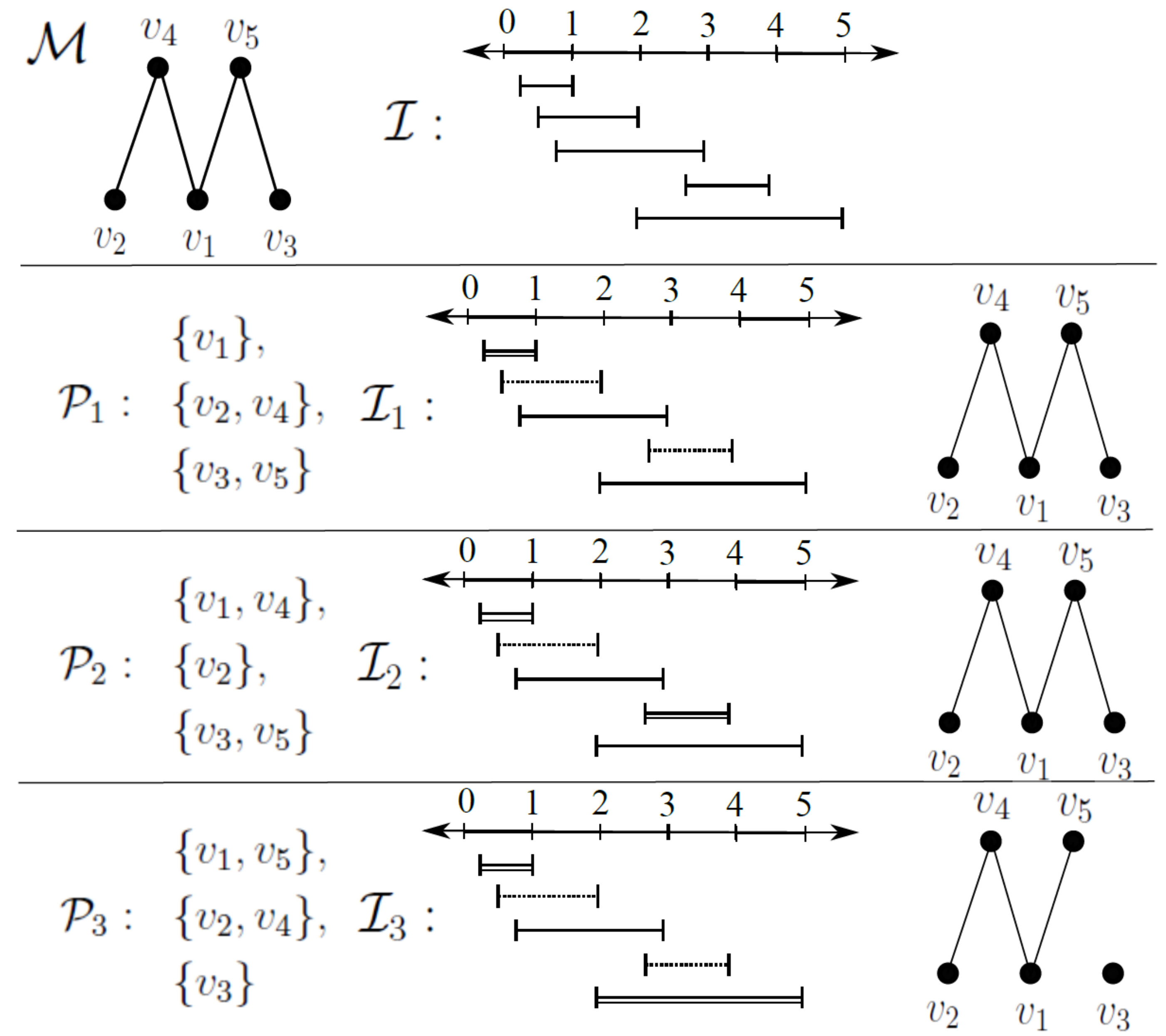}
\caption{The order $\mathcal{M}$ and corresponding intervals constructed in accord with Theorem \ref{IkOimpliesCPIkG}.  Also
the different interval $k$-orders as a function of the three ways $\mathcal{M}$ can be partitioned into chains. (We have
solid, double-solid, and dashed intervals to represent the class assignment given by the indicated partition into chains.)}\label{fig:M_troublesome}
\end{center}
\end{figure}

\noindent {\bf Claim:} \emph{There exists a covering of $P$ by chains such that the chains in the decomposition of $\overline{N}_{P'}(v_i)$
are each contained in a unique class.
That is, no chain of $\overline{N}_{P'}(v_i)$ contains vertices of two or more classes.}\\

\noindent{\bf Proof of Claim.}  Suppose we have any covering.
Let $i$ be the smallest subscript where $v_i$ fails the conditions of the claim.
We will change the covering so $v_1, \dots, v_i$ meet the conditions of the claim,
by successively increasing $i$.

Let $x \prec y$ be two vertices in a chain in the decomposition of $\overline{N}_{P'}\left(v_i\right)$ where $x$ and $y$ are in different classes,
there are no vertices between $x$ and $y$, but all vertices that precede $x$ in the chain are in the same class as $x$.
Let the class that contains $x$ be $X = x_1 \prec x_2 \prec \dots \prec x_j = x \prec \dots \prec x_m$,
and the class that contains $y$ be $Y = y_1 \prec y_2 \prec \dots \prec y_l = y \prec \dots \prec y_n$.

Observe that  $x_{j+1}$ (if it exists) must satisfy $v_i \prec x_{j+1}$, or $y \prec x_{j+1}$;
otherwise $\overline{N}_{P'}\left(v_i\right)$ would contain a component that is not a chain.

Observe also that, if $y_{l-1}$ exists, either $y_{l-1} \prec v_i$, $y_{l-1} \prec x_{j}$ or, if neither of those are true,
$y_{l-1}$ is contained among $v_1, v_2 \dots v_{i-1}$.
For each of these cases, if both $y_{l-1}$ and $x_{j+1}$ exist, $y_{l-1} \prec x_{j+1}$ by transitivity and the observation above.

We form a new covering by replacing $X$ and $Y$ with two new classes as follows:
$X' = x_1 \prec \dots \prec x \prec y \prec \dots \prec y_n$, and
$Y' = y_1 \prec \dots \prec y_{l-1} \prec x_{j+1} \prec \dots x_m$.

We need to check that the new covering has several properties.
First observe that, because of the structure of $\overline{N}_{P'}\left(v_i\right)$,
no other chain in the decomposition of $\overline{N}_{P'}\left(v_i\right)$ has vertices in $X$ or $Y$,
so their classes remain unaltered at this step.
Second observe that, in the chain in $\overline{N}_{P'}\left(v_i\right)$ that contains $x$ and $y$,
all of the vertices that precede $x$ must be in $X$ and consequently become part of $X'$
(and thus we can repeat this process until the entire chain is in a single class, if necessary).
Finally we need to make certain that, for any $v_{r}$ with $r < i$, all chains in $\overline{N}_{P'}\left(v_{r}\right)$ stay in unique classes.

Suppose a chain in $\overline{N}_{P'}\left(v_r\right)$ does break into two classes.
Then this chain must contain either $x = x_{j} \prec x_{j+1}$ or $y_{l-1} \prec y_{l}$.

\smallskip

\underline{Case 1}:  The chain contains $x_{j} \prec x_{j+1}$.
Recall that either $y_l \prec x_{j+1}$ or $v_i \prec x_{j+1}$.
In the first case since $x_{j} \prec y \prec x_{j+1}$, there would be a chain in $\overline{N}_{P'}\left(v_r\right)$
split between $X$ and $Y$ which contradicts our choice of $i$.
In the second case, since $v_r || x_{j+1}$ it follows that  $v_r || v_i$ (since $r < i$, we know that $v_i \prec v_r$ is impossible).
However the vertices $x_{j}, x_{j+1}, v_i$ are all in $\overline{N}_{P'}\left(v_r\right)$ but do not form a chain, contradicting our labeling.

\smallskip

\underline{Case 2}:  The chain contains $y_{l-1} \prec y_{l}$.
If $y_{l-1} \prec x_j$, then $y_{l-1} < x_j < y_{l}$ for part of the chain in $\overline{N}_{P'}\left(v_r\right)$
that is divided between $X$ and $Y$, contradicting our choice of $i$.
Suppose then that $y_{l-1}$ and $x$ are incomparable.
By the ordering of the vertices $x \not \prec v_r$, however, since $y_{l} || v_r$, $v_r \not \prec x$ either.
Consequently the vertices $y_{l-1}$, $y_l$, and $x_j$ are part of a component of $\overline{N}_{P'}\left(v_r\right)$ which is not a chain,
contradicting our assumptions.

\smallskip

By successively forming these new coverings we make each chain of $\overline{N}_{P'}\left(v_i\right)$ fit in a unique class
and ultimately find a covering that satisfies the conditions of the claim.  This proves the claim. \hfill $\Box$\\

\noindent We return to the proof of Theorem \ref{IkOimpliesCPIkG} and verify that the interval representation has the requisite properties.

\medskip

{\bf Claim:} The intervals are class proper.

\smallskip

\noindent\emph{Proof of Claim.} Note that no interval is empty or trivial.
Now, suppose some interval contains another from its class.
That is, suppose there are elements $v_a, v_d$ from the same chain of $P$ with $v_a \prec v_d$ and $b = \mu(v_d), \mu(v_a) = c$, and $b < c$.
Hence, $l(v_d)< l(v_a)$.
But $v_d$ must be comparable to all elements $v_i$ with $i < b$, and $v_a$ must be comparable with all elements $v_i$, with $i< c$.
So $v_b \prec v_a$ and transitivity forces $v_b \prec v_d$, contradicting $\mu(v_d) = c$.

\medskip

\noindent{\bf Claim:} If $v_i \| v_j$, then $I_{v_i} \cap I_{v_j} \neq \O$.

\smallskip

\noindent\emph{Proof of Claim.} Suppose $v_i$ and $v_j$ are distinct vertices with $v_i \| v_j$ and $i <j$.
Then $\mu(i),\mu(j) \leq i$ and both intervals contain the segment $[i - (1-j/n),i]$.

\medskip

\noindent{\bf Claim:} If $v_i$ and $v_j$ are comparable, then $v_iv_j \not\in E(G)$.

\smallskip

\noindent\emph{Proof of Claim.}  If $v_i, v_j$ belong to the same chain in $P$, then their intervals belong to the same
class and do not induce adjacency regardless of whether they intersect.
So suppose $v_i \in C_r$, $v_j \in C_s$, where $r \neq s$, and say $v_i \prec v_j$, hence $i < j$.




\medskip

\noindent{\bf Claim:} If $x \prec y$ or $y \prec x$, then $I_x \cap I_y = \O$, unless $I_x,I_y \in \mathcal{I}_j$.

\smallskip

\noindent \emph{Proof of Claim.} Suppose $x \prec y$ and $x$ and $y$ are in different classes.
Let $x = v_i$ and $y = v_j$ (and necessarily $i < j$).
We need to check that $i < \mu(j)$.
Let $k = \mu(j)$, and suppose $i \geq k$.
Since $v_k \| y$, $x \neq v_k$, so $k < i < j$.
In this case, $x = v_i \not \prec v_k$, by the structure of the labeling.
On the other hand if $v_k \prec x$, by transitivity $v_k \prec y$, a contradiction.
Consequently $v_k \| x$ as well.
However, now $x$ and $y$ are in the same chain of $\overline{N}_{P'}\left(v_k\right)$, and therefore must be in the same class.

\noindent The proof of the theorem is complete. \hfill \rule{2mm}{2mm}\\

\subsection{Characterization of Interval $3$-Orders by One Obstruction}

We have identified the mechanism by which a transitive orientation of the complement of an
interval $k$-graph is prohibited: that an interval contain another from its class properly.
In this section we characterize interval $3$-graphs that are cocomparabilty graphs via one forbidden induced subgraph
(the complement of a $6$-cycle) and
consequently also the interval $3$-orders by one forbidden suborder (the order often referred to as the $3$-crown).

\begin{theorem} A $3$-chromatic cocomparability graph is a class-proper interval $3$-graph
if and only if it contains no subgraph isomorphic to $\overline{C}_6$ of Figure \ref{fig:func_not_IkG}.
\end{theorem}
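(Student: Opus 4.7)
The plan is to handle the two directions separately.  For the forward direction, if $G$ is a class-proper interval $3$-graph, then Theorem \ref{IkGweakchord} implies $G$ is weakly chordal, so $\overline{G}$ contains no induced $C_6$, and hence $G$ contains no induced $\overline{C}_6$.

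The substantive direction is the reverse.  Given a $3$-chromatic cocomparability graph $G$ with no induced $\overline{C}_6$, I fix a transitive orientation of $\overline{G}$ to produce a poset $P$.  Because $\chi(G)=\mathrm{width}(P)$, we have $\mathrm{width}(P)=3$.  I aim to construct the elimination ordering required by Theorem \ref{IkOimpliesCPIkG}: a labeling $v_1,\dots,v_n$ of $V$ so that each $v_i$ is minimal in $P_i:=P\setminus\{v_1,\dots,v_{i-1}\}$ and $\overline{N}_{P_i}(v_i)$ decomposes into chains.  Once such an ordering is in hand, Theorem \ref{IkOimpliesCPIkG} delivers a class-proper interval $3$-representation of $G$.

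The ordering will be constructed greedily by repeatedly removing a \emph{good} minimal, meaning one whose incomparability neighborhood is a disjoint union of chains.  The key lemma, proved by induction, is that every finite poset $Q$ with $\mathrm{width}(Q)\le 3$ whose incomparability graph avoids $\overline{C}_6$ admits a good minimal.  Since minimal elements form an antichain of size at most $\mathrm{width}(Q)\le 3$, I argue by cases on the number of minimals.  A unique minimal is trivially good (every other element is above it), so I may assume every minimal is \emph{bad}, meaning $\overline{N}(m)$ contains a cherry $\{a,b,c\}$ with $a\|b$ and $c$ comparable to both $a$ and $b$.  In the two-minimals case $\{m_1,m_2\}$, I first reduce $m_i$'s cherry to the canonical form $\{m_{3-i},a,b\}$ with $m_{3-i}\prec a$, $m_{3-i}\prec b$, and $a\|b$; configurations in which $m_{3-i}$ is a leaf of the cherry force a non-minimal element to lie above no minimal (impossible), and configurations in which $m_{3-i}$ is absent are massaged into canonical form by observing that $a,b$ must lie above $m_{3-i}$ anyway.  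With canonical pairs $\{a,b\}$ and $\{a',b'\}$ for $m_1$ and $m_2$, a short transitivity check (e.g., $a\prec a'$ would force $m_2\prec a'$, contradicting $a'\|m_2$) shows $\{a,b,a',b'\}$ is an antichain of size four in $P$, contradicting $\mathrm{width}(P)\le 3$.  For three minimals $\{m_1,m_2,m_3\}$, I show that if $m_i$ has no cherry of the form $\{m_j,m_k,x\}$ with $x$ above both $m_j$ and $m_k$, then $\overline{N}(m_i)$ partitions as $(\{m_j\}\cup A)\sqcup(\{m_k\}\cup B)$ for chains $A,B$ (using that $\overline{N}(m_i)$ has width at most $2$ and that its only other configurations force a width-$4$ antichain in $P$), making $m_i$ good.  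Hence each bad $m_i$ yields such an $x_i$; transitivity shows $\{x_1,x_2,x_3\}$ is an antichain disjoint from the $m_i$, and the adjacencies between these six vertices in $G$ form exactly two triangles joined by the matching $m_ix_i$, i.e., an induced $\overline{C}_6$, contradicting the hypothesis.

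The main obstacle will be the cherry-reduction step in the two-minimals case: because the ``point'' and ``leaves'' of a cherry play asymmetric roles and $m_{3-i}$ may occupy any of them or none, several sub-cases must each be driven to either the canonical form or an explicit impossibility using minimality.  Once the key lemma is established, removing the good minimal at each stage preserves both $\mathrm{width}\le 3$ and the absence of $\overline{C}_6$ in the induced subgraph, so the greedy construction terminates with the required elimination ordering, and Theorem \ref{IkOimpliesCPIkG} produces the class-proper interval $3$-representation.
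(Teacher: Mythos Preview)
Your argument is correct and follows essentially the same route as the paper: both fix a transitive orientation of $\overline{G}$ to obtain a width-$3$ poset $P$, then show by a case analysis on the number of minimal elements that $P$ always has a minimal $v$ with $\overline{N}_P(v)$ a disjoint union of chains (equivalently, $N_G[v]$ complete multipartite), and feed the resulting elimination labeling into Theorem~\ref{IkOimpliesCPIkG}; in the three-minimals case both proofs extract witnesses $x_1,x_2,x_3$ from the three ``bad'' minimals and verify that $\{m_1,m_2,m_3,x_1,x_2,x_3\}$ induces $\overline{C}_6$.

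A few small differences are worth noting.  For the forward direction the paper argues directly that no vertex of $\overline{C}_6$ has a complete multipartite closed neighborhood (so by Proposition~\ref{IkGgraphelim} it cannot sit inside a class-proper interval $k$-graph), whereas you invoke weak chordality; both are valid.  In the two-minimals case the paper fixes a Dilworth decomposition $C_1,C_2,C_3$ and observes that if $x\in C_1$ and $y\in C_2$ are the only minimals then the bottom of $C_3$ lies above one of them, say $x$, forcing $\overline{N}(x)\subseteq C_2$ and hence $x$ is good; your cherry-reduction to a size-$4$ antichain reaches the same contradiction but is longer than necessary.  Conversely, your treatment of the three-minimals case is cleaner: you isolate the single relevant obstruction (an $x\in\overline{N}(m_i)$ above both $m_j$ and $m_k$) and show its absence makes $m_i$ good via a short width argument, where the paper runs a more ad hoc chase through the chains.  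Finally, you make the induction and the appeal to Theorem~\ref{IkOimpliesCPIkG} explicit, which the paper leaves somewhat implicit in its ``minimal counterexample'' framing.
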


\begin{proof} Let $G$ be a $3$-chromatic cocomparability graph and note that no vertex of
$\overline{C}_6$ has an induced complete multipartite neighborhood, so $G$ cannot have $\overline{C}_6$
as an induced subgraph.  Also note that any cocomparability graph on fewer than six vertices is an interval
$k$-graph, so suppose $|V(G)| \geq 6$.

Now assume $G$ is minimal counterexample to the result in that the neighborhood
of no vertex of $G$ induces a complete multipartite neighborhood.  Let $P=(V(G), \prec)$ be the
poset obtained from Corollary 3.1.  Since $G$ is $3$-chromatic,
and by Dilworth's theorem, $P$ can be decomposed into three maximal chains, say $C_1, C_2$, and $C_3$.
Now we argue by the number of minimal elements of $P$.

If $P$ has one minimal element, say $x$, then $x$ is isolated in $G$ and its neighborhood is complete multipartite.
If $P$ has two minimal elements, say $x$ and $y$ with (relabeling if necessary) $x \in C_1$ and $y \in C_2$.
Let $z$ be the minimal element of $C_3$, but $z$ is not minimal in $P$, so without loss of generality $x<z$.
Then $x$ is incomparable with a subchain of $C_2$ and is complete multipartite. [Details: Suppose $x$ is minimal in $C_1$ and
$y$ is minimal in $C_2$. By design $x \| y$. Let $C_2$ consist of $y \prec y_1 \prec \cdots \prec y_k$.  Then we may suppose
$y_k \succ x$, and so $x \| y,y_1,y_2,\dots, y_{k-1}$.  $N[x] \cong K_{1,k-1}$.]
By dual arguments we can also determine that $P$ must have $3$ maximal elements.


Suppose $P$ has exactly three minimal elements, $x \in  C_1, y \in C_2, z \in C_3$.
$P$ must have exactly $3$ maximal elements as well.
None of $x, y, z$ have induced complete multipartite neighborhoods in $G$, otherwise $G$ is not minimal as assumed.
We know that each of $C_1$, $C_2$, and $C_3$ have more than one element, otherwise $x$ or $y$ or $z$ is both a minimal and maximal element of $P$.
Suppose $x$ is such an element.  Then $x$ is isolated in $P$, and either $y$ or $z$ has a complete multipartite neighborhood in $G$.

Suppose $C_1$ consists of $x \prec x_1 \prec  \dots \prec x_r$, $C_2$ consists
of $y \prec y_1 \prec \dots \prec y_s$, and $C_3$ consists of $z \prec z_1 \prec \dots \prec z_t$.
Relabeling if necessary, we may assume there is an element $y_i$, $i \geq 1$ with $x \| y_i$ and $z \prec y_i$.
Now $N[z]$ is not complete multipartite.
If there is a $y_j$, $1 \leq j \leq i$, with $y_j \| z$ and $x \prec y_j$, then $y_j \prec y_i$ and hence $x \prec y_i$, a contradiction.
So there is an $x_k$, $k \geq 1$, with $x_k \| z$ and $y \prec x_k$.
Now, $N[y]$ is not complete multipartite in $G$.
If there is an $x_l$ incomparable with $y$ and with $z \prec x_l$, then $x_l \prec x_k$, a contradiction.
Thus there is an element $z_m$ incomparable with $y$ and with $x \prec z_m$.  But now the elements $x,z_m, z, y_i, y, x_k$ are related
so that $P$ contains an induced $6$-cycle; that is, $G$ contains and induced $\overline{C}_6$.\end{proof}\\

We end with a conjecture.\\

\noindent{\bf Conjecture:} \emph{If $G$ is a cocomparability graph, then
$G$ is an interval $k$-graph if and only if it has no induced subgraph isomorphic to $\overline{C}_6$
or $\overline{2P}_3$ (cf. Figure \ref{fig:Flink}).}

\begin{figure}
\begin{center}
\includegraphics[scale=0.25]{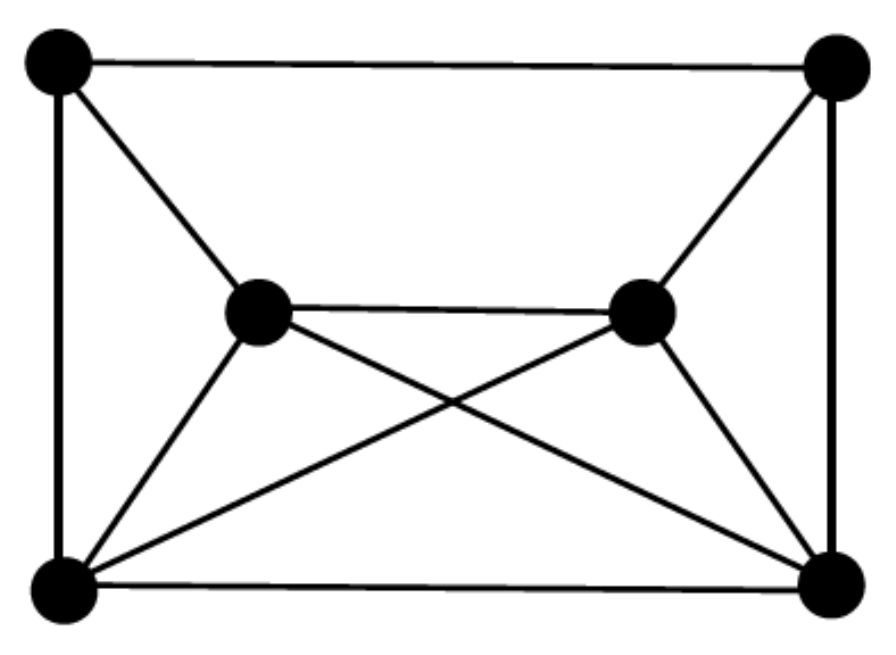}
\caption{The graph $\overline{2P}_3$; we conjecture it and $\overline{C}_6$ are the forbidden subgraphs
that characterize the incomparability graphs of interval $k$-orders.}\label{fig:Flink}
\end{center}
\end{figure}

\bibliographystyle{amsplain}
\bibliography{IkGOrders_rev}

\providecommand{\bysame}{\leavevmode\hbox to3em{\hrulefill}\thinspace}
\providecommand{\MR}{\relax\ifhmode\unskip\space\fi MR }
\providecommand{\MRhref}[2]{%
  \href{http://www.ams.org/mathscinet-getitem?mr=#1}{#2}
}
\providecommand{\href}[2]{#2}
\begin{thebibliography}{10}

\bibitem{BraSpiSte}
A.~Brandst{\"{a}}dt, J.~Spinrad, and L.~Stewart, \emph{Bipartite permutation
  graphs}, Discrete Applied Math. \textbf{18} (1987), 279--292.

\bibitem{BraSpiSte2}
\bysame, \emph{Bipartite permutation graphs are bipartite tolerance graphs},
  Congressus Numerantium \textbf{58} (1987), 165--174.

\bibitem{BroFle}
D.~E. Brown and B.~M. Flesch, \emph{A characterization of 2-tree proper
  interval 3-graphs}, Journal of Discrete Mathematics \textbf{Article ID
  143809} (2014).

\bibitem{BroLanPIO}
D.~E. Brown and L.~J. Langley, \emph{The mathematics of preference, choice and
  order: Essays in honor of peter c. fishburn}, ch.~Probe Interval Orders,
  pp.~313--322, Springer-Verlag Heidelberg Berlin, 2009.

\bibitem{BroLunUIBG}
D.~E. Brown and J.~R. Lundgren, \emph{Characterizations for unit interval
  bigraphs}, Congressus Numerantium \textbf{206} (2010), 5 -- 17.

\bibitem{Bro}
D.E. Brown, \emph{Variations on interval graphs}, Ph.D. thesis, University of
  Colorado Denver, 2004.

\bibitem{BroLunAus}
D.E. Brown and J.R. Lundgren, \emph{Bipartite probe interval graphs, interval
  point bigraphs, and circular arc graphs}, Australasian J. Combinatorics
  \textbf{35} (2006), 221--236.

\bibitem{BroLunShe}
D.E. Brown, J.R. Lundgren, and L.~Sheng, \emph{Cycle-free unit and proper probe
  interval graphs}, submitted to Discrete Applied Math.

\bibitem{CorOlaSte}
D.G. Corneil, S.~Olariu, and L.~Stewart, \emph{Asteroidal triple-free graphs},
  SIAM J. Discrete Math. \textbf{10} (1997), 399--430.

\bibitem{DasRoySenWes}
S.~Das, A.B. Roy, M.~Sen, and D.B. West, \emph{Interval digraphs: an analogue
  of interval graphs}, Journal of Graph Theory \textbf{13} (1989), no.~2,
  189--202.

\bibitem{Dil}
R.~P. Dilworth, \emph{A decomposition theorem for partially ordered sets},
  Annals of Mathematics \textbf{51} (1950), 161--166.

\bibitem{DusMil}
Ben Dushnik and E.W. Miller, \emph{Partially ordered sets}, Amer. J. Math.
  \textbf{63} (1941), 600--610. \MR{MR0004862 (3,73a)}

\bibitem{Fis}
P.~C. Fishburn, \emph{Interval orders and interval graphs}, Wiley \& Sons,
  1985.

\bibitem{FulGro}
D.R. Fulkerson and O.A. Gross, \emph{Incidence matrices and interval graphs},
  Pacific J. Math. \textbf{15} (1965), 835--855.

\bibitem{Gal}
T.~Gallai, \emph{Transitiv orientbare graphen}, Acta Math Acad. Sci. Hungar
  \textbf{18} (1967), 25--66.

\bibitem{GolRotUrr}
M.C. Golumbic, D.~Rotem, and J.~Urrutia, \emph{Comparability graphs and
  intersection graphs}, Discrete Math \textbf{43} (1983), 37--46.

\bibitem{HelHua}
P.~Hell and J.~Huang, \emph{Interval bigraphs and circular arc graphs}, Journal
  of Graph Theory \textbf{46} (2004), 313--327.

\bibitem{Lov}
L.~Lov\'{a}sz, \emph{A characterization of perfect graphs}, Journal of
  Combinatorial Theory, Series B \textbf{13} (1972), 95 --98.

\bibitem{McKMcM}
T.~McKee and F.R. McMorris, \emph{Topics in intersection graph theory}, Society
  for Industrial and Applied Mathematics, Philadelphia, 1999.

\bibitem{McMWanZha}
F.R. McMorris, C.~Wang, and P.~Zhang, \emph{On probe interval graphs}, Discrete
  Applied Mathematics \textbf{88} (1998), 315--324.

\bibitem{SanSen}
B.K. Sanyal and M.K. Sen, \emph{Indifference digraphs: a generalization of
  indifference graphs and semiorders}, SIAM J. Discrete Math. \textbf{7}
  (1994), no.~2, 157--165.

\bibitem{She}
L.~Sheng, \emph{Cycle-free probe interval graphs}, Congressus Numerantium
  \textbf{88} (1999), 33--42.

\bibitem{Spi}
J.~Spinrad, \emph{Circular-arc graphs with clique cover number two}, J. Comb.
  Theory, Series B \textbf{44} (1987), no.~3, 300--306.

\end{thebibliography}
\end{document}